\tikzset{ext/.style={circle, draw,inner sep=1pt},int/.style={circle,draw,fill,inner sep=1.4pt},nil/.style={inner sep=1pt}}
\tikzset{cy/.style={circle,draw,fill,inner sep=2pt},scy/.style={circle,draw,inner sep=2pt},scyx/.style={draw,cross out,inner sep=2pt},scyt/.style={draw,regular polygon,regular polygon sides=3,inner sep=0.95pt}}
\tikzset{exte/.style={circle, draw,inner sep=3pt},inte/.style={circle,draw,fill,inner sep=3pt}}
\tikzset{diagram/.style={matrix of math nodes, row sep=3em, column sep=2.5em, text height=1.5ex, text depth=0.25ex}}
\tikzset{diagram2/.style={matrix of math nodes, row sep=0.5em, column sep=0.5em, text height=1.5ex, text depth=0.25ex}}
\newcommand{\nomul}{{
\begin{tikzpicture}[baseline=-.55ex,scale=.2]
 \node[circle,draw,fill,inner sep=.5pt] (a) at (0,0) {};
 \node[circle,draw,fill,inner sep=.5pt] (b) at (1,0) {};
 \draw (a) edge[bend left=25] (b);
 \draw (a) edge[bend right=25] (b);
 \draw (.2,-.5) -- (.8,.6);
\end{tikzpicture}}}
\newcommand{\notadp}{{
\begin{tikzpicture}[baseline=-.55ex,scale=.2, every loop/.style={}]
 \node[circle,draw,fill,inner sep=.5pt] (a) at (0,0) {};
 \draw (a) edge[loop] (a);
 \draw (-.2,-.2) -- (.2,.5);
\end{tikzpicture}}}
\newcommand{\mxto}[1]{{\overset{#1}{\longmapsto}}}
\theoremstyle{plain}
  \newtheorem{thm}{Theorem}
  \newtheorem{prop}{Proposition}
  \newtheorem{lemma}{Lemma}
\theoremstyle{definition}
  \newtheorem*{rem}{Remark}
\newcommand{\alg}[1]{\mathfrak{{#1}}}
\newcommand{\K}{{\mathbb{K}}}
\newcommand{\Z}{{\mathbb{Z}}}
\newcommand{\N}{{\mathbb{N}}}
\newcommand{\fGC}{\mathrm{fGC}}
\newcommand{\bpm}{\begin{pmatrix}}
\newcommand{\epm}{\end{pmatrix}}
\newcommand{\GC}{\mathrm{GC}}
\newcommand{\GCn}{\mathrm{GCn}}
\newcommand{\QP}[1]{\left( #1 \right)_\infty}
\newcommand{\VV}{V}
\DeclareMathOperator{\sgn}{sgn}
\newcommand{\gra}{\mathit{gra}}
\newcommand{\grt}{\alg {grt}}
\DeclareMathOperator{\LCM}{lcm}
\DeclareMathOperator{\GCD}{gcd}
\begin{document}
\title{The dimensions and Euler characteristics of M. Kontsevich's graph complexes}
\author{Thomas Willwacher}
\address{Institute of Mathematics\\ University of Zurich\\ Winterthurerstrasse 190 \\ 8057 Zurich, Switzerland}
\email{thomas.willwacher@math.uzh.ch}

\author{Marko \v Zivkovi\' c}
\address{Institute of Mathematics\\ University of Zurich\\ Winterthurerstrasse 190 \\ 8057 Zurich, Switzerland}
\email{marko.zivkovic@math.uzh.ch}


\begin{abstract}
We provide a generating function for the (graded) dimensions of M. Kontsevich's graph complexes of ordinary graphs.
This generating function can be used to compute the Euler characteristic in each loop order.
Furthermore, we show that graphs with multiple edges can be omitted from these graph complexes.
\end{abstract}

\maketitle

\section{Introduction}
The graph complexes in its various guises are some of the most mysterious and fascinating objects in mathematics. 
They are combinatorially very simple to define, as linear combinations of graphs of a certain kind, with the operation of edge contraction or dually, vertex splitting as differential.
Still, their cohomology is very hard to compute and largely unknown at present.

There are various versions of graph cohomology, each playing a central role in one or more fields of algebra, topology or mathematical physics.
\begin{itemize}
 \item Ordinary (non-decorated) graph cohomology describes the deformation theory of the $E_n$ operads and plays a central role in many quantization problems \cite{grt, Kformal, Kgrcomplex}.
 \item Ribbon graph complexes describe the cohomology of the moduli spaces of curves \cite{Kformal}.
 \item Lie decorated graph complexes describe the cohomology of the automorphisms of a free group and play a central role in many results in low dimensional topology \cite{cullervogtmann,Kformal}.
 \item Graph complexes of the above sorts, but with external legs, also have a wide range of applications in topology, knot theory and algebra \cite{LambrechtsTurchin,Turchin1, Turchin2, Turchin3, BerglundMadsen,vogtmannhairy}.
\end{itemize}
In all cases, the differential leaves the genus (i.~e., the first Betti number, or loop order) of the graph invariant. Hence the graph complexes split into a direct sum of subcomplexes according to genus, and all the above graph cohomology spaces are (at least) bigraded, by the cohomological degree and the genus.

In none of the above cases is the graph cohomology known.
 Our current knowledge essentially comes from the following sources:
\begin{itemize}
 \item In low degrees, one knows the cohomology by computer experiments. \cite{barnatanmk}
 \item By degree considerations one can show that the graph cohomology is concentrated in a certain range of allowed bidegrees.
 \item For several, but not all flavors of graph cohomology there are formulas for the Euler characteristics. \cite{smillievogtmann, LambrechtsTurchin, harerzagier}.
 \item There are many families of known graph cocycles. Some of these cocycles are known to represent non-trivial cohomology classes, while for others this is a conjecture.
 \item On some graph complexes there is known to be additional algebraic structure. For example the complex $\GC_n$ of ordinary graphs (see below) is naturally a differential graded Lie algebra. Hence the algebraic operations may be used to construct new cohomology classes out of known classes.
 \item The first author showed in \cite{grt} that $H^0(\GC_2)\cong \grt_1$. This is the only case we know in which a classical graph cohomology is known in one degree where it is non-trivial.
\end{itemize}

In this paper we focus on the ``ordinary'' graph complexes introduced by M. Kontsevich.
The elements of these complexes are linear combinations of isomorphism classes of undirected graphs, with at least trivalent vertices, and without ``odd'' symmetries. There are two natural choices of when to count a symmetry of a graph as odd, thus yielding two distinct complexes. First, a symmetry may be counted as odd if its induced permutation on the set of edges is odd.
Secondly, a symmetry may be considered odd if its induced permutation on the set of vertices and half-edges is odd.
For example, the left hand graph in the following picture has an odd symmetry according to the first convention, but not the second, and vice versa for the right hand graph.
\begin{align*}
 \Theta 
 &:=
 \begin{tikzpicture}[baseline=-.65ex, scale=.7]
  \node[int] (v) at (0,0){};
  \node[int] (w) at (1,0){};
  \draw (v) edge (w) edge[bend left=35] (w) edge[bend right=35] (w);
 \end{tikzpicture}
 &
 &
 \begin{tikzpicture}[baseline=-.65ex, rotate=90, scale=.5]
  \node[int] (v) at (0,0){};
  \node[int] (w1) at (0:1){};
  \node[int] (w2) at (72:1){};
  \node[int] (w3) at (144:1){};
  \node[int] (w4) at (216:1){};
  \node[int] (w5) at (288:1){};
  \draw (v) edge (w1) edge (w2) edge (w3) edge (w4) edge (w5)
        (w1) edge (w2) edge (w5) 
        (w3) edge (w4) edge (w2) 
        (w4) edge (w5);
 \end{tikzpicture}
\end{align*}

We will denote by $\VV_{v,e}^{even}$, respectively $\VV_{v,e}^{odd}$ the vector spaces spanned by isomorphism classes of at least trivalent graphs without ``odd'' symmetries for the first, respectively the second notion of oddness, with $v$ vertices and $e$ edges. We allow multiple edges or tadpoles (short cycles).
The first main result of this paper are generating functions for the dimensions of $\VV_{v,e}^{even}$, respectively $\VV_{v,e}^{odd}$, i.~e., for the numbers of isomorphism classes of graphs as above. 

\begin{thm}\label{thm:dimgenfun}
Define
\begin{align*}
 P^{odd}(s,t) &:= \sum_{v,e} \dim\left(\VV_{v,e}^{odd}\right) s^v t^e &
 P^{even}(s,t) &:= \sum_{v,e} \dim\left(\VV_{v,e}^{even}\right) s^v t^e \, .
\end{align*}
Then
\begin{align*}
 P^{odd}(s,t) &:= 
\frac{1}{\QP{-s, (st)^2} \QP{(st)^2,(st)^2}}
\sum_{j_1,j_2,\dots \geq 0}
\prod_\alpha \frac{(-s)^{\alpha j_\alpha}}{j_\alpha! (-\alpha)^{j_\alpha}  }
\frac{1}{\QP{(-st)^\alpha, (-st)^\alpha}^{j_\alpha}}
\left( \frac{\QP{t^{2\alpha-1}, (st)^{4\alpha-2}}}{\QP{(-s)^{2\alpha-1}t^{4\alpha-2},(st)^{4\alpha-2}}} \right)^{j_{2\alpha-1}/2}
\\&\quad\quad\quad
\left( \frac{\QP{t^{\alpha}, (st)^{2\alpha}}}{ \QP{(-s)^{\alpha}t^{2\alpha},(st)^{2\alpha}}} \right)^{j_{2\alpha}}
\prod_{\alpha, \beta} \frac{1}{\QP{t^{\LCM(\alpha, \beta)}, (-st)^{\LCM(\alpha, \beta)}}^{\GCD(\alpha, \beta)j_\alpha j_\beta/2}},
\\
 P^{even}(s,t) &:= 
\frac{\QP{s, (st)^2}}{\QP{-st,(st)^2}}
\sum_{j_1,j_2,\dots \geq 0}
\prod_\alpha \frac{s^{\alpha j_\alpha}}{j_\alpha! \alpha^{j_\alpha}  }
\frac{1}{\QP{(-st)^\alpha, (-st)^\alpha}^{j_\alpha}}
\left( \frac{\QP{(-t)^{2\alpha-1}, (st)^{4\alpha-2}}}{\QP{s^{2\alpha-1}t^{4\alpha-2},(st)^{4\alpha-2}}} \right)^{j_{2\alpha-1}/2}
\\&\quad\quad\quad
\left( \frac{\QP{(-t)^{\alpha}, (st)^{2\alpha}}}{ \QP{s^{\alpha}t^{2\alpha},(st)^{2\alpha}}} \right)^{j_{2\alpha}}
\prod_{\alpha, \beta} \QP{(-t)^{\LCM(\alpha, \beta)}, (-st)^{\LCM(\alpha, \beta)}}^{\GCD(\alpha, \beta)j_\alpha j_\beta/2}
\end{align*}
where 
\begin{align*}
\QP{a,q} &= \prod_{k\geq 0}\left(1-a q^k\right)
\end{align*}
is the $q$-Pochhammer symbol.
\end{thm}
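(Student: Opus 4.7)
The plan is to compute the dimensions via a signed Polya--Burnside enumeration, then extract the at-least-trivalent part by a generating function inversion. The starting point is the elementary character identity $\frac{1}{|\mathrm{Aut}(G)|} \sum_{\phi} \epsilon_X(\phi) = \mathbf{1}\{G \text{ has no odd automorphism}\}$, where $\epsilon_X$ is the edge-permutation sign (for $X = even$) or the product of vertex- and half-edge-permutation signs (for $X = odd$). Summing over isomorphism classes and unfolding the orbit--stabilizer relation reduces $\dim \VV^X_{v,e}$ to a signed sum over $(\sigma, \tau) \in S_v \times S_e$ of the count of $(\sigma, \tau)$-fixed labeled graphs, equivalently of $\sigma$-invariant functions $f : [e] \to S$ where $S$ is the set of unordered pairs of vertices of $[v]$ (including tadpole slots $\{i, i\}$).

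Parametrizing $\sigma$ by its cycle type $(j_\alpha)$ produces the outer sum $\sum_{j_1, j_2, \ldots}$ together with the weight $\prod_\alpha s^{\alpha j_\alpha}/(j_\alpha!\, \alpha^{j_\alpha})$, twisted by $(-1)^{(\alpha-1) j_\alpha}$ in the odd case to account for $\sgn(\sigma)$. The orbits of $\sigma$ on $S$ decompose into (a) one tadpole orbit of size $\alpha$ per $\alpha$-cycle, (b) off-diagonal orbits within a single cycle (of size $\alpha$, plus one of size $\alpha/2$ when $\alpha$ is even), and (c) cross-cycle orbits of size $\LCM(\alpha, \beta)$ with multiplicity $\GCD(\alpha, \beta)$ per pair of distinct cycles of lengths $\alpha, \beta$. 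For each orbit type I would compute the signed generating function in $t$ for the allowed edge configurations on that orbit: in $\VV^{even}$, multi-edges are killed by their edge-swap symmetry, so each slot carries $0$ or $1$ edges (polynomial factors like $1 - (-t)^c$); in $\VV^{odd}$, tadpoles are killed by the half-edge flip but multi-edges survive (geometric series like $1/(1 - (-t)^c)$). Collecting the contributions, the resulting products over orbit classes reorganize into the $q$-Pochhammer factors of the statement, with the distinction between odd and even cycle lengths producing respectively the $j_{2\alpha-1}/2$ and $j_{2\alpha}$ exponents, and the cross-cycle factor producing the $\GCD(\alpha,\beta) j_\alpha j_\beta/2$ exponent.

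This yields the full summand up to an overall prefactor accounting for the at-least-trivalent restriction. To impose it, I would invoke the standard decomposition of an arbitrary-valence graph as an at-least-trivalent core with each edge subdivided by a chain of bivalent vertices, together with disjoint closed chains of bivalents. This gives a factorization $\tilde P^X(s,t) = P^X(s,t) \cdot K^X(s,t)$, and an explicit computation of $K^X$ identifies $1/K^X$ with the prefactor in the theorem. The principal obstacle will be the careful accounting of signs: the flip of a length-$n$ chain contributes $(-1)^{\lfloor n/2 \rfloor}$ to the edge permutation, the half-edge swap at a tadpole contributes $-1$ to the half-edge permutation, and within-cycle versus cross-cycle orbits must be separated for even and odd cycle lengths. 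These sign interactions are precisely what produce the alternating $q$-Pochhammer arguments $(-t)^c$, $(-s)^\alpha$, and the parity-dependent exponents $2\alpha - 1$ versus $\alpha$ in the final formula.
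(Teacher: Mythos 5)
The first half of your plan is sound and is essentially the route the paper takes: the signed Burnside sum over $S_v\times S_e$ (plus edge reversals), organized by the cycle type $\underline{j}$ of the vertex permutation, with the per-orbit edge generating functions distinguishing the even case (multiple edges killed, factors $1-(-t)^c$) from the odd case (tadpoles killed, geometric factors $1/(1-t^c)$), reproduces the paper's ``total characters'' $\xi_{\underline{j},0}^{+-+}$ and $\xi_{\underline{j},0}^{-+-}$ together with the weight $\prod_\alpha s^{\alpha j_\alpha}/(j_\alpha!\,\alpha^{j_\alpha})$.

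The gap is in your treatment of the at-least-trivalent restriction. The claimed factorization $\tilde P^X(s,t)=P^X(s,t)\cdot K^X(s,t)$, with $1/K^X$ equal to the prefactor of the theorem, is false, for two reasons. First, even non-equivariantly, replacing each core edge by a chain of bivalent vertices is a \emph{per-edge} dressing, i.e.\ a substitution of the edge variable inside the core generating function, not multiplication by an overall factor; and your decomposition omits $0$- and $1$-valent vertices (isolated vertices, open chains, trees/antennas), which are present in arbitrary-valence graphs and must also be removed. Second, and more seriously, the removal of low-valence vertices has to be performed \emph{equivariantly}, inside each conjugacy class $\underline{j}$: vertices of valence $\le 2$ fixed by $g_{\underline{j}}$ come in whole $\sigma$-cycles, and the ways such cycles can attach to the core depend on the cycle lengths (a chain of special cycles joining an $\alpha$-cycle to a $\beta$-cycle must consist of cycles of order divisible by $\LCM(\alpha,\beta)$, antennas attach to a single cycle, etc.). This is exactly why in the stated formula the bulk of the trivalence correction sits \emph{inside} the sum over $\underline{j}$, with exponents $j_\alpha$, $j_{2\alpha-1}/2$, $j_{2\alpha}$ and $\GCD(\alpha,\beta)j_\alpha j_\beta/2$, and only the vacuum part (chains and loops of low-valence cycles attached to nothing) produces the prefactors $\QP{s,(st)^2}/\QP{-st,(st)^2}$ and $1/\bigl(\QP{-s,(st)^2}\QP{(st)^2,(st)^2}\bigr)$. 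The paper handles this by an inclusion--exclusion over which cycles are ``special'' (valence $\le 2$), encoded in auxiliary variables $s^+_\alpha$ in a special total character $\overline{\xi}_{\underline{j},3}$, using the structural fact that special cycles can only form vacuum, antennas, or connections between two core cycles; the substitution $s^+_\alpha:=-s^\alpha/\alpha$ then implements the alternating sum. Your proposal needs this cycle-type-dependent inclusion--exclusion (or an equivalent equivariant dressing argument) in place of the global factorization; as written, the step identifying the prefactor would fail.
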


These formulas may be used to compute the Euler characteristics of the graph complexes, which can be found in Table \ref{tbl:eulerchar} below for loop orders up to 30.
These results in particular allow us to probe the graph cohomology far beyond the region where it is currently accessible to direct computer calculation.

The second main result of this paper is to show that the graph complexes $\GC_{2n+1}$ (see below for the definition) can be significantly simplified essentially without altering their cohomology by omitting all isomorphism classes of graphs with multiple edges. Let $\GC_{2n+1}^\nomul\subset \GC_{2n+1}$ the subcomplex spanned by graphs without multiple edges.

\begin{thm}\label{thm:intromultiple}
 The inclusion $\K\Theta\oplus \GC_{2n+1}^\nomul \to \GC_{2n+1}$ is a quasi-isomorphism of complexes.
\end{thm}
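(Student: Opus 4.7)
Equivalently, I want to show that the quotient complex $Q := \GC_{2n+1}/(\K\Theta \oplus \GC_{2n+1}^\nomul)$, spanned by non-$\Theta$ isomorphism classes of graphs with at least one pair of parallel edges, is acyclic.

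A preliminary step is to verify that $\GC_{2n+1}^\nomul$ really is a subcomplex of $\GC_{2n+1}$: edge contraction of a simple graph can produce a multi-edge whenever the endpoints of the contracted edge share a common neighbor (a triangle situation), and one must check that the multi-edge contributions cancel in pairs using the signs coming from the vertex ordering in the odd convention. A secondary subcomplex check is that any tadpole created by contracting one of a parallel pair vanishes, which should hold because the resulting tadpole graph has an odd half-edge automorphism in $\GC_{2n+1}$.

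For the acyclicity of $Q$, I would introduce a filtration of $\GC_{2n+1}$ by the excess multiplicity $\mu(\Gamma) = \sum_{\{u,v\}} \max(m_{uv}(\Gamma)-1, 0)$, where $m_{uv}$ is the number of edges between $u$ and $v$. Edge contraction is non-increasing on $\mu$, so we obtain a convergent spectral sequence whose $E_0$ differential retains only those edge contractions that strictly preserve all multiplicities (namely contractions of simple edges whose endpoints share no common neighbor). On the $E_0$ page I would expect the complex to factor, graph by graph, as a tensor product of a ``reduced simple-graph'' complex on the underlying simple graph of $\Gamma$ and a ``local'' complex at each multi-edge pair recording just the multiplicity.

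Acyclicity of $Q$ then reduces to showing that these local complexes have vanishing cohomology, except in the fully local situation where the whole graph is a banana graph $B_k$ on only two vertices. Among the banana graphs, only those with an orientation compatible with the odd convention are nonzero, and among these only $\Theta = B_3$ is a nontrivial cohomology class: the higher banana graphs $B_k$ for odd $k \geq 5$ must be individually exhibited as coboundaries in $\GC_{2n+1}$, for instance by writing $B_k = d X$ for an explicit three-vertex primitive $X$. The main obstacles will be the sign cancellation establishing the subcomplex property, the rigorous setup of the tensor factorization on $E_0$ in the presence of orientation data on multi-edges (whose parallel members are essentially indistinguishable), and the explicit exactness of the higher banana graphs, which is what singles out $\Theta$ as the unique surviving cohomology class.
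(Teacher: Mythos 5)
Your plan stalls at the spectral sequence step, and the problem is the choice of filtration. Filtering by the excess multiplicity $\mu(\Gamma)=\sum_{u,v}\max(m_{uv}-1,0)$ makes the associated graded differential retain only those splittings (or contractions) that leave every multiplicity untouched, i.e.\ each bunch of parallel edges is frozen into a single ``fat'' edge. Consequently there is no differential acting on the multi-edge data at all: your ``local complexes recording just the multiplicity'' carry no cohomological content, and the asserted tensor factorization cannot produce acyclicity. Concretely, the $5$-fold banana $B_5$ is a nonzero cocycle on your $E_0$ page (every splitting of one of its vertices strictly decreases $\mu$, and terms with $1$- or $2$-valent vertices are discarded) and is not an $E_0$-coboundary, so it survives to $E_1$; the same happens for any graph whose multiple edges cannot be altered without lowering $\mu$. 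Killing such classes would require controlling higher differentials of the spectral sequence, for which you give no mechanism; exhibiting the bananas alone as exact does not suffice, since they are only the simplest of the surviving classes. (Also note that in the paper's convention the differential is vertex splitting, so $\GC_{2n+1}^\nomul$ is a subcomplex for trivial reasons --- your worry about triangle contractions belongs to the dual convention --- and in the splitting convention a two-vertex banana can never be a coboundary; it simply fails to be a cocycle.)

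The paper's proof avoids exactly this trap: it first replaces the complex by its one-vertex irreducible version $\GCn_n$ (Proposition \ref{Coh2}, due to Conant--Vogtmann), splits off $\K\Theta$, and then filters by the total strength $S(\Gamma)=\sum_e\left\lfloor N(e)/2\right\rfloor$ rather than by excess multiplicity. Because strength only counts pairs of parallel edges, the associated graded differential $d^0$ still contains the splittings that break a multiple edge into two even parts, and this is enough room to write down an explicit homotopy $h$ (merging the double edge at a ``good'' trivalent vertex into the adjacent multiple edge, with carefully chosen coefficients) satisfying $d^0h+hd^0=2S(\Gamma)\,\mathrm{id}$. This kills every graded piece of positive strength at once --- bananas included --- with no case analysis on later pages; the one-vertex irreducibility is used precisely to exclude the two degenerate configurations (the theta configuration and the isolated double edge) where the homotopy identity would fail. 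To salvage your approach you would either have to coarsen your filtration to something like the paper's strength, or supply an argument on the higher pages of your spectral sequence; as written, the central acyclicity claim is unsupported.
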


\begin{rem}
 S. Morita, T. Sakasai and M. Suzuki recently computed the Euler characteristics of the commutative graph complexes up to weight 16, using different methods \cite{MSS}. 
\end{rem}

\subsection{Structure of the paper}
In section \ref{sec:grcomplexes} we recall the definitions of the graph complexes we study in this paper.
Section \ref{sec:multiple} is dedicated to the proof of Theorem \ref{thm:intromultiple}, while in section \ref{sec:eulerchar} we give a proof of Theorem \ref{thm:dimgenfun}.


\subsection*{Acknowledgements}
We thank P. Etingof, who suggested the problem of computing the Euler characteristics of the graph complexes to T.W. some years ago. We also thank V. Turchin for valuable discussions.
This work was partially supported by the Swiss National Science Foundation, grants PDAMP2\_137151 and 200021\_150012.

\section{Graph complexes}\label{sec:grcomplexes}

\subsection{Graph vector spaces}
For $v,e\in\N$ let $\gra_{v,e,0}$ be the set of all graphs with $v$ distinguishable vertices and $e$ distinguishable oriented edges between vertices, i.~e. $\gra_{v,e,0}$ is the set of all maps $g\colon K\to E\times E$ where $V=\{1,2,\dots v\}$ is the set of vertices and $E=\{1,2,\dots,e\}$ is the set of edges. We say that edge $e$ connects vertex $g_1(e)$ to the vertex $g_2(e)$. Let $V_{v,e,0}$ be vector spaces freely generated by graphs in $\gra_{v,e,0}$.

For $v,e\in\N$ and $i\in\N$ let $\gra_{v,e,i}$ be the set of all graphs in $\gra_{v,e,0}$ such that every vertex is adjacent to at least $i$ edges, i.~e.\ for every vertex $v\in N$ there are at least $i$ pairs $(e,j)\in K\times\{1,2\}$ such that $g_j(e)=v$. We call the number of adjacent edges the \emph{valence} of a vertex. Let $V_{v,e,i}$ be vector space freely generated by graphs in $\gra_{v,e,i}$.

However, we are not interested in distinguishing vertices, or edges, or directions of edges. First, if we reverse the orientation of an edge of a graph $\Gamma$ and obtain the graph $\Gamma'$, we identify either $\Gamma=\Gamma'$ or $\Gamma=-\Gamma'$. To do this we introduce the representations $\nu^+$ of the group $(S_2)^e$ on the space $V_{v,e,i}$ by reversing the orientation of edges. Let $\nu^-(g)=(-1)^t\nu^+(g)$, where $t$ is the number of edges reversed by $g\in(S_2)^e$, be another representation of the group $S_2^e$ on the space $V_{v,e,i}$. Let $V_{v,e,i}^{s_\nu}=$ for $s_\nu\in\{+,-\}$ be the space of coinvariants of the representation $\nu^{s_\nu}$.

Secondly, let $\mu^+$ be the representations of the symmetric group $S_e$ on the space $V_{v,e,i}$ which interchange edges. Let $\mu^-(g)=\sgn(g)\mu^+(g)$, where $\sgn(g)$ is the sign of the permutation $g\in S_e$, be another representations of the group $S_e$ on the space $V_{v,e,i}$. 
It is easy to see that representations $\mu^{s_\mu}$ for $s_\mu\in\{+,-\}$ descend to representations on the spaces of coinvariants $V_{v,e,i}^{s_\nu}$.
Let $V_{v,e,i}^{s_\nu s_\mu}$ be the space of coinvariants of the representation $\mu^{s_\mu}$ on $V_{v,e,i}^{s_\nu}$.

Finally, let $\rho^+$ be the representations of the symmetric group $S_v$ on the space $V_{v,e,i}$ by interchanging vertices. Let $\rho^-(g)=\sgn(g)\rho^+(g)$ be another representation of the group $S_v$ on the space $V_{v,e,i}$. It is easy to see that representations $\rho^{s_\rho}$ for $s_\rho\in\{+,-\}$ descend to the spaces of coinvariants $V_{v,e,i}^{s_\nu s_\mu}$. We are interested in the space $V_{v,e,i}^{s_\nu s_\mu s_\rho}$, the space of coinvariants of the representation $\rho^{s_\rho}$ on $V_{v,e,i}^{s_\nu s_\mu}$.

A basis of $V_{v,e,i}^{+++}$ consists of all graphs with $v$ indistinguishable vertices and $e$ unoriented indistinguishable edges (i.~e., isomorphism classes of graphs), such that every vertex is adjacent to at least $i$ edges. A basis of for instance $V_{v,e,i}^{+-+}$ consists of the subset of (isomorphism classes of) graphs that do not have automorphisms which act by an odd permutation on the edges.

Note that if $s_\mu=-$, then every graph which contains multiple edge has an odd automorphism, by interchanging the constituent edges of the multiple edge. Hence such graphs are zero in the complexes $V_{v,e,i}^{s_\nu - s_\rho}$. Similarly, if $s_\nu=-$, then reverting the direction of a tadpole, i.~e, an edge $e$ such that $g_1(e)=g_2(e)$, is an odd automorphism and hence such graphs are zero in the complexes $V_{v,e,i}^{- s_\mu s_\rho}$.
We may be interested in excluding tadpoles or multiple edges even in the cases $s_\nu=+$ and $s_\mu=+$ by starting with the space of graphs which exclude them instead of $\gra_{v,e}$. Let $V_{v,e,i}^{* s_\mu s_\rho}\subset V_{v,e,i}^{+ s_\mu s_\rho}$ and $V_{v,e,i}^{* s_\mu}\subset V_{v,e,i}^{+ s_\mu}$ be the subspaces spanned by graphs without tadpoles, and let  $V_{v,e,i}^{s_\nu * s_\rho}\subset V_{v,e,i}^{s_\nu + s_\rho}$ and $V_{v,e,i}^{s_\nu *}\subset V_{v,e,i}^{s_\nu +}$ be the subspaces spanned by graphs without multiple edges. Thus, from now on we consider $s_\nu,s_\mu\in\{+,*,-\}$.

\subsection{Chain complexes}

Let $d\colon V_{v,e,0}^{s_\nu s_\mu s_\rho}\rightarrow V_{v+1,e+1,0}^{s_\nu s_\mu s_\rho}$ for $s_\nu,s_\mu\in\{+,-\}$ be the map
\begin{equation}
 d\Gamma\,=\sum_{x\in V(\Gamma)}\frac{1}{2}(\text{splitting of $x$}) - (\text{adding an edge at $x$}),
\end{equation}
where ''splitting of $x$" means putting
\begin{tikzpicture}[scale=.5]
 \node[int] (a) at (0,0) {};
 \node[int] (b) at (1,0) {};
 \draw (a) edge[->] (b);
 \node[above left] at (a) {$\scriptstyle x$};
 \node[above right] at (b) {$\scriptstyle v+1$};
\end{tikzpicture}
instead of the vertex $x$ and summing over all possible ways how to connect edges that have been connected to $x$ to the new $x$ and $v+1$. Note that the expression is $0$ unless $s_\nu=s_\rho$, so we consider only that case. ''Adding an edge at $x$" means adding
\begin{tikzpicture}[scale=.5]
 \node[int] (a) at (0,0) {};
 \node[int] (b) at (1,0) {};
 \draw (a) edge[->] (b);
 \node[above left] at (a) {$\scriptstyle x$};
 \node[above right] at (b) {$\scriptstyle v+1$};
\end{tikzpicture}
on the vertex $x$. Unless $x$ is an isolated vertex, it will cancel one term of the splitting. One can check that $d^2=0$ if and only if $s_\mu=-s_\rho$. Therefore, for $n\in\Z$ we can define the full graph complexes with a differential $d$:
\begin{equation*}
 \fGC_n = \prod_{v\geq 1} \prod_{e\geq 0} 
 \begin{cases}
  V_{v,e,0}^{+-+}[(n-1)e-n(v-1)] & \text{for $n$ even} \\ 
  V_{v,e,0}^{-+-}[(n-1)e-n(v-1)] & \text{for $n$ odd}
 \end{cases}.
\end{equation*}
The degree shifts are chosen such that there is a natural differential graded Lie algebra structure on $\fGC_n$, see \cite{grt}.

It can be seen that $d$ can not produce 1-valent and 2-valent vertices, tadpoles and multiple edges, if there were none of them before. Also, the differential obviously does not alter the number of connected components of a graph.
Therefore we may define several smaller subcomplexes.
Let $V^{even}_{v,e}:=V_{v,e,3}^{+-+}$, $V^{even*}_{v,e}:=V_{v,e,3}^{*-+}$, $V^{odd}_{v,e}:=V_{v,e,3}^{-+-}$ and $V^{odd*}_{v,e}:=V_{v,e,3}^{-*-}$. Let $\tilde V_{v,e,i}^{s_\nu s_\mu s_\rho}\subset V_{v,e,i}^{s_\nu s_\mu s_\rho}$ be the space generated by all connected graphs, and similarly let $\tilde V^{even}_{v,e}:=\tilde V_{v,e,3}^{+-+}$, $\tilde V^{even*}_{v,e}:=\tilde V_{v,e,3}^{*-+}$, $\tilde V^{odd}_{v,e}:=\tilde V_{v,e,3}^{-+-}$ and $\tilde V^{odd*}_{v,e}:=\tilde V_{v,e,3}^{-*-}$. Then we define the following subcomplexes of $\fGC_n$:

\begin{align*}
\fGC_{n}^{\geq 3} 
&=
\prod_{v\geq 1} \prod_{e\geq 0} 
\begin{cases}
  V^{even}_{v,e}[(n-1)e-n(v-1)] 
  &
  \text{for $n$ even}
  \\
   V^{odd}_{v,e}[(n-1)e-n(v-1)] 
  &
  \text{for $n$ odd}
\end{cases}
&
\\
\fGC_n^{\geq 3,\notadp} 
&= 
\prod_{v\geq 1} \prod_{e\geq 0} 
  V^{even*}_{v,e}[(n-1)e-n(v-1)] 
&  \text{for $n$ even}
   \\
\fGC_n^{\geq 3,\nomul} 
 &= \prod_{v\geq 1} \prod_{e\geq 0} 
   V^{odd*}_{v,e}[(n-1)e-n(v-1)] 
     &
   \text{for $n$ odd}
\\
\GC_n 
&=
\prod_{v\geq 1} \prod_{e\geq 0} 
\begin{cases}
  \tilde V^{even}_{v,e}[(n-1)e-n(v-1)] 
  &
  \text{for $n$ even}
  \\
    \tilde V^{odd}_{v,e}[(n-1)e-n(v-1)] 
  &
  \text{for $n$ odd}
\end{cases}
&
\\
\GC_n^{\notadp} 
&= 
\prod_{v\geq 1} \prod_{e\geq 0} 
  \tilde V^{even*}_{v,e}[(n-1)e-n(v-1)] 
&  \text{for $n$ even}
   \\
\GC_n^{\nomul} 
 &= \prod_{v\geq 1} \prod_{e\geq 0} 
   \tilde V^{odd*}_{v,e}[(n-1)e-n(v-1)] 
     &
   \text{for $n$ odd}
\end{align*}


In a connected graph a \emph{separating vertex} is a vertex which if removed makes the graph disconnected.
A graph without a separating vertex is called one-vertex irreducible.
The differential $d$ can not form a separating vertex, so there are subcomplexes generated by the one-vertex irreducible graphs, which we denote by adding a letter n (non-separable) to the name, e.g.\ $\GCn_n^{\nomul}$.

All complexes split into a product of subcomplexes according to the number of edges minus number of vertices $b=e-v$, which is preserved by the differential.\footnote{The number $b$ is of course minus the Euler characteristic of the graph as a topological space.} For them, the number $n$ (up to degree shift) does not really matter, only its parity. If we restrict to graphs with at least trivalent vertices those subcomplexes are finitely dimensional. Therefore, we define subcomplexes
$\GC_{n,b}\subset \GC_n$ etc. generated by graphs with fixed $e-v=b$. 

We denote the Euler characteristics of these finite dimensional complexes as follows:
\begin{equation}\label{equ:chidefs}
\begin{aligned}
\chi_b^{odd}&:=\sum_{v\geq 0}(-1)^v\dim\left(V^{odd}_{v,b+v}\right) = \chi(\fGC_{n,b}^{\geq 3}) & \text{for $n$ odd}
\\
\chi_b^{even}&:=\sum_{v\geq 0}(-1)^{b+v}\dim\left(V^{even}_{v,b+v}\right)=\chi(\fGC_{n,b}^{\geq 3}) & \text{for $n$ even}
\\
\chi_b^{odd*}&:=\sum_{v\geq 0}(-1)^v\dim\left(V^{odd*}_{v,b+v}\right) = \chi(\fGC_{n,b}^{\geq 3,\nomul}) & \text{for $n$ odd}
\\
\chi_b^{even*}&:=\sum_{v\geq 0}(-1)^{b+v}\dim\left(V^{even*}_{v,b+v}\right)=\chi(\fGC_{n,b}^{\geq 3,\notadp}) & \text{for $n$ even}
\\
\tilde \chi_b^{odd}&:=\sum_{v\geq 0}(-1)^v\dim\left(\tilde V^{odd}_{v,b+v}\right) = \chi(\GC_{n,b}^{}) & \text{for $n$ odd}
\\
\tilde \chi_b^{even}&:=\sum_{v\geq 0}(-1)^{b+v}\dim\left(\tilde V^{odd}_{v,b+v}\right) = \chi(\GC_{n,b}^{}) & \text{for $n$ even}
\\
\tilde \chi_b^{odd*}&:=\sum_{v\geq 0}(-1)^v\dim\left(\tilde V^{even*}_{v,b+v}\right) = \chi(\GC_{n,b}^{\nomul}) & \text{for $n$ odd}
\\
\tilde \chi_b^{even*}&:=\sum_{v\geq 0}(-1)^{b+v}\dim\left(\tilde V^{even*}_{v,b+v}\right) = \chi(\GC_{n,b}^{\notadp}) & \text{for $n$ even}
\end{aligned}
\end{equation}
In this paper we are interested in computing the above Euler characteristics. The numeric result is contained in Table \ref{tbl:eulerchar} below.

\section{Graphs with multiple edges may be omitted}\label{sec:multiple}

The cohomologies of the various graph complexes introduced above are highly related. Obviously, the graph complexes with disconnected graphs are just symmetric products of the complexes of connected graphs. Furthermore, it has been shown in \cite[Proposition 3.4]{grt} that adding the trivalence condition changes the cohomology of the graph complexes only by a list of known classes, and the omission of graphs with tadpoles does not change the cohomology further. 

Finally Conant and Vogtman \cite{conant_cut_2005} showed that the complexes of one-vertex irreducible graphs are quasi-isomorphic to their non-one-vertex irreducible relatives.


\begin{prop}[\cite{conant_cut_2005}, cf. Appendix F of \cite{grt}\footnote{ We note that in these references the result is only shown for one version of the graph complex. However, the proofs do not depend on the presence or absence of tadpoles or multiple edges. }
]
\label{Coh2}
\begin{align*}
 H(\GC_n)&=H(\GCn_n)
 &
 H(\GC_n^\nomul)&=H(\GCn_n^\nomul),
\end{align*}
\end{prop}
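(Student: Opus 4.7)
The plan is to reproduce the strategy of Conant--Vogtmann \cite{conant_cut_2005} and check that it is insensitive to the presence or absence of multiple edges, as asserted in the footnote. Recall that every connected graph $\Gamma$ admits a canonical block decomposition: the maximal one-vertex-irreducible subgraphs (blocks) meet only at separating vertices, and this combinatorial structure is organized into a block-cut tree. Let $b(\Gamma)$ denote the number of blocks. Then $b(\Gamma)=1$ iff $\Gamma$ is one-vertex irreducible, and the inclusion $\GCn_n\hookrightarrow\GC_n$ picks out the $b=1$ piece.

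First, I would introduce the descending filtration $F^p\GC_n$ spanned by graphs with $b(\Gamma)\geq p$. The vertex-splitting differential either operates inside a single block (preserving $b$) or splits a separating vertex in a way that merges blocks (decreasing $b$); it never increases $b$. Hence $F^\bullet$ is a genuine filtration of complexes, and on the associated graded the differential acts only by splits internal to the individual blocks.

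Next, I would identify $\gr^p\GC_n$ with a symmetric construction over $\GCn_n$: a connected graph with $p$ blocks is specified by a tree whose nodes carry 1VI ``block'' graphs and whose edges record the pairwise identifications of vertices between adjacent blocks. On the associated graded, the $b\geq 2$ part becomes (up to signs and degree shifts inherited from the conventions $s_\nu,s_\mu,s_\rho$) a reduced bar-type complex built on $\GCn_n$, for which one produces an explicit contracting homotopy by ``absorbing'' a leaf block of the block-cut tree into the block attached to it at the corresponding cut vertex. Showing that this homotopy $h$ satisfies $dh+hd=\mathrm{id}$ on $F^{\geq 2}/F^{\geq 3}$, and iterating through the filtration via a standard spectral sequence argument, then yields $H(\GC_n)\cong H(\GCn_n)$.

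The main obstacle is the bookkeeping: one must verify that the leaf-absorption homotopy is well-defined on isomorphism classes (so that automorphisms permuting identical blocks do not spoil the definition), and that it interacts correctly with the sign rules distinguishing the even and odd graph complexes. Once this verification is complete, the extension to $\GC_n^{\nomul}$ is essentially free, because the block-cut decomposition and leaf-absorption homotopy only manipulate the way blocks are glued at separating vertices and never introduce or destroy multiple edges inside a block; hence the whole argument restricts verbatim to the subcomplex spanned by graphs without multiple edges, yielding the second equality.
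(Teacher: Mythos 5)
Your argument founders at its first step: the number of blocks $b(\Gamma)$ is \emph{not} monotone under the vertex-splitting differential, so neither $\{b\geq p\}$ nor $\{b\leq p\}$ is preserved by $d$, and the filtration on which everything else rests does not exist. Concretely, splitting a separating vertex can create a bridge and hence a \emph{new} block: if $\Gamma$ is the wedge at a vertex $y$ of two one-vertex irreducible, at least trivalent graphs $A$ and $B$, then the term of $d\Gamma$ in which all edges of $A$ at $y$ stay at one of the two new vertices and all edges of $B$ at $y$ go to the other is $A$--bridge--$B$, so $b$ jumps from $2$ to $3$; taking $A\not\cong B$ rigid (trivial automorphism groups), one checks that the two splittings producing this graph add rather than cancel, in both the even and the odd conventions. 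In the opposite direction, the terms in which each of $A$ and $B$ sends at least one edge to each of the two new vertices are one-vertex irreducible, so $b$ drops from $2$ to $1$; this is exactly why the span of graphs with separating vertices is not a subcomplex (only $\GCn_n$ is, as the paper notes). There is also an internal inconsistency: for your descending filtration $F^p=\{b(\Gamma)\geq p\}$ to consist of subcomplexes you would need $d$ never to \emph{decrease} $b$, whereas what you assert is that it never increases $b$.

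Because the filtration is not compatible with $d$, the identification of an associated graded with a bar-type construction over $\GCn_n$ and the leaf-absorption homotopy never get off the ground; and even granting such an associated graded, ``absorbing a leaf block into its neighbour'' is not induced by any operation of the same nature as the splitting differential, so the asserted identity $dh+hd=\mathrm{id}$, together with the automorphism and sign bookkeeping you defer, is precisely the substance of the cited theorem rather than a routine check. Note that the paper itself gives no proof of this proposition: it quotes it from \cite{conant_cut_2005} and Appendix F of \cite{grt}, the footnote only recording that those arguments are blind to the presence of tadpoles or multiple edges --- which is the one point of your sketch that is sound, since an argument manipulating only the way blocks are attached at separating vertices restricts to $\GC_n^\nomul$. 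As it stands, however, your proposal does not establish $H(\GC_n)=H(\GCn_n)$; to repair it you would have to replace the block-count filtration by a filtration or explicit homotopy on the quotient $\GC_n/\GCn_n$ (equivalently, reprove the acyclicity statement of Conant--Gerlits--Vogtmann in the splitting picture) that is genuinely compatible with the differential.
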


We can use these results to show Theorem \ref{thm:intromultiple} of the introduction.

\begin{proof}[Proof of Theorem \ref{thm:intromultiple}]
We actually prove that $H(\GCn_n^{})=H(\GCn_n^\nomul)\oplus\K[2n-3]$ and use Proposition \ref{Coh2}. We have splitting of complexes
\begin{equation*}
 \GCn_n^{}\cong\GCn_n'\oplus\K\Theta,
\end{equation*}
where $\Theta$ is the ``Theta" graph
\begin{tikzpicture}[baseline=-.65ex,scale=.5]
 \node[int] (a) at (0,0) {};
 \node[int] (b) at (1,0) {};
 \draw (a) edge (b);
 \draw (a) edge[bend left=35] (b);
 \draw (a) edge[bend right=35] (b);
\end{tikzpicture},
of degree $3-2n$ and $\GCn_n'$ is the remainder. It is clear that $H(\K\Theta)\cong\K[2n-3]$, so the claim reduces to showing that $H(\GCn_n')=H(\GCn_n^\nomul)$.

To be precise, let a \emph{multiple edge} $e$ be the set of all edges connecting the same pair of vertices. Let $N(e)$ be the number of edges in multiple edge $e$, let $S(e):=\left\lfloor\frac{N(e)}{2}\right\rfloor$  be its \emph{strength} and let the \emph{total strength} $S(\Gamma)$ be the sum of the strengths of all multiple edges in a graph $\Gamma$. The differential $d$ can not increase the total strength, so we have a filtration of $\GCn_n'$ by the total strength. The subcomplexes of fixed $b=e-v$ are finitely dimensional, so for all of them the spectral sequences converge to their cohomology, and therefore the original spectral sequence converges to $H(\GCn_n')$.

The differential $d^0$ on the first page does not decrease the total strength, or we can say
\begin{equation*}
 d^0\Gamma=\sum_{x\in V(\Gamma)}d^0_x\qquad\text{for}\qquad d^0_x\,``="\frac{1}{2}(\text{strength preserving splittings of $x$}) - (\text{adding an edge at $x$}),
\end{equation*}
where ''strength preserving splittings of $x$" are the splittings of $x$ that do not split multiple edge in two parts with odd number of edges.

Let a \emph{good} vertex be a trivalent vertex $x$ two of whose edges form a double edge:
\begin{tikzpicture}[baseline=-.65ex,scale=.5]
 \node[int] (a) at (0,0) {};
 \node[int] (b) at (1,0) {};
 \node[above] at (b) {$\scriptstyle x$};
 \node[int] (c) at (2,0) {};
 \draw (a) edge[bend left=20] (b);
 \draw (a) edge[bend right=20] (b);
 \draw (b) edge (c);
\end{tikzpicture}.
The other end of the double edge is denoted by $t(x)$ and the other end of the single edge is denoted by $s(x)$. We require $t(x)\neq s(x)$. For every good vertex $x$ we define a map $h_x$ such that locally:
\begin{equation}
\begin{tikzpicture}[baseline=-.65ex,scale=.8]
 \node[int] (a) at (0,0) {};
 \node[int] (b) at (1,0) {};
 \node[int] (c) at (2,0) {};
 \node[below] at (a) {$\scriptstyle t(x)$};
 \node[below] at (b) {$\scriptstyle x$};
 \node[below] at (c) {$\scriptstyle s(x)$};
 \draw (a) edge[bend left=20] (b);
 \draw (a) edge[bend right=20] (b);
 \draw (a) edge[very thick,bend left=50] node[above] {$\scriptstyle N$} (c);
 \draw (b) edge (c);
 \draw (a) edge (-.3,0.3);
 \draw (a) edge (-.4,0.1);
 \draw (a) edge (-.3,-0.3);
 \draw (a) edge (-.4,-0.1);
 \draw (c) edge (2.3,0.3);
 \draw (c) edge (2.4,0.1);
 \draw (c) edge (2.3,-0.3);
 \draw (c) edge (2.4,-0.1);
\end{tikzpicture}\qquad\mxto{h_x}\quad\pm
\begin{cases}
 \frac{1}{N+2} & \text{if $N$ odd} \\
 \frac{1}{N+1} & \text{if $N$ even}
\end{cases}\quad
\begin{tikzpicture}[baseline=-.65ex,scale=.8]
 \node[int] (a) at (0,0) {};
 \node[int] (b) at (1.5,0) {};
 \node[below] at (a) {$\scriptstyle t(x)$};
 \node[below] at (b) {$\scriptstyle s(x)$};
 \draw (a) edge[very thick] node[above] {$\scriptstyle N+2$} (b);
 \draw (a) edge (-.3,0.3);
 \draw (a) edge (-.4,0.1);
 \draw (a) edge (-.3,-0.3);
 \draw (a) edge (-.4,-0.1);
 \draw (b) edge (1.8,0.3);
 \draw (b) edge (1.9,0.1);
 \draw (b) edge (1.8,-0.3);
 \draw (b) edge (1.9,-0.1);
\end{tikzpicture},
\end{equation}
where the thick edge with number $N\geq 0$ indicates an $N$-fold edge, and the sign is chosen such that
\begin{equation*}
\begin{tikzpicture}[baseline=-.65ex,scale=.8]
 \node[int] (a) at (0,0) {};
 \node[int] (b) at (1,0) {};
 \node[int] (c) at (2,0) {};
 \node[below] at (a) {$\scriptstyle t(v)$};
 \node[below] at (b) {$\scriptstyle v$};
 \node[below] at (c) {$\scriptstyle s(v)$};
 \draw (a) edge[bend left=20] (b);
 \draw (a) edge[bend right=20] (b);
 \draw (a) edge[very thick,bend left=50] node[above] {$\scriptstyle N$} (c);
 \draw (b) edge[<-] (c);
 \draw (a) edge (-.3,0.3);
 \draw (a) edge (-.4,0.1);
 \draw (a) edge (-.3,-0.3);
 \draw (a) edge (-.4,-0.1);
 \draw (c) edge (2.3,0.3);
 \draw (c) edge (2.4,0.1);
 \draw (c) edge (2.3,-0.3);
 \draw (c) edge (2.4,-0.1);
\end{tikzpicture}\qquad\mxto{h_x}\quad +
\begin{cases}
 \frac{1}{N+2} & \text{if $N$ odd} \\
 \frac{1}{N+1} & \text{if $N$ even}
\end{cases}\quad
\begin{tikzpicture}[baseline=-.65ex,scale=.8]
 \node[int] (a) at (0,0) {};
 \node[int] (b) at (1.5,0) {};
 \node[below] at (a) {$\scriptstyle t(v)$};
 \node[below] at (b) {$\scriptstyle s(v)$};
 \draw (a) edge[very thick] node[above] {$\scriptstyle N+2$} (b);
 \draw (a) edge (-.3,0.3);
 \draw (a) edge (-.4,0.1);
 \draw (a) edge (-.3,-0.3);
 \draw (a) edge (-.4,-0.1);
 \draw (b) edge (1.8,0.3);
 \draw (b) edge (1.9,0.1);
 \draw (b) edge (1.8,-0.3);
 \draw (b) edge (1.9,-0.1);
\end{tikzpicture},
\end{equation*}
if $v$ is the last vertex, all other vertices keep their number and all edges keep their orientation. Note that $h_x$ does not change the total strength. We put $h_x=0$ if $x$ is not good, and
\begin{equation}
 h\Gamma=\sum_{x\in V(\Gamma)}h_x\Gamma.
\end{equation}

\begin{lemma}
It holds that
\begin{equation*}
 \left(d^0h+hd^0\right)\Gamma=2S(\Gamma)\Gamma.
\end{equation*}
\end{lemma}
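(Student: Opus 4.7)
The plan is to expand $(d^0 h + h d^0)\Gamma$ by decomposing both operators into local pieces: $d^0 = \sum_y d^0_y$ and $h = \sum_x h_x$ (the latter summed over good vertices $x$ of $\Gamma$). One then groups the terms in the resulting double sum according to how the local operations at $x$ and $y$ interact with the same edges. When $x$ and $y$ are at disjoint locations (i.e., $y \notin \{x, t(x), s(x)\}$ and the splitting at $y$ does not create a new good vertex at the site of $x$), the compositions $h_x d^0_y \Gamma$ and $d^0_y h_x \Gamma$ produce the same underlying graph modification in different orders and cancel up to the graded sign. Hence only the \emph{close} pairs can contribute.

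The interacting terms fall naturally into two classes. In Type (A), $x$ is already a good vertex of $\Gamma$ and $y \in \{t(x), s(x)\}$: here $h_x$ first contracts the local configuration to an $(N+2)$-fold direct multi-edge between $t(x)$ and $s(x)$, picking up the factor $\tfrac{1}{N+1}$ or $\tfrac{1}{N+2}$, and the subsequent strength-preserving $d^0_y$ can split this multi-edge back apart, either recovering $\Gamma$ or producing a different graph. In Type (B), $y$ is a vertex of $\Gamma$ adjacent to some multi-edge $e$ of multiplicity $M \geq 2$, and the strength-preserving splitting of $y$ creates a new trivalent good vertex $v+1$ by detaching exactly two edges of $e$ together with the new split edge (the only way for $v+1$ to be good, since $\GCn_n$ has no tadpoles); applying $h_{v+1}$ then contracts it back. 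I would carry out the explicit computation at the level of lifted, vertex- and edge-labelled graphs, where each atomic splitting or edge-addition has coefficient $\pm 1$, and only afterwards pass to the coinvariant space. For each multi-edge $e$ of multiplicity $M$ with endpoints $a, b$, the Type (A) and Type (B) contributions at both $a$ and $b$ combine with the factors $\tfrac{1}{N+1}$ or $\tfrac{1}{N+2}$ coming from $h$ to yield exactly $2\lfloor M/2 \rfloor$ copies of $\Gamma$, while the remaining outputs pair up and cancel between the two types.

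The main obstacle is the combinatorial bookkeeping: verifying that every graph other than $\Gamma$ produced by $(d^0 h + h d^0)\Gamma$ appears with equal and opposite coefficients from a Type (A) and a Type (B) term, while the total $\Gamma$-coefficient summed over all multi-edges of $\Gamma$ equals $2S(\Gamma)$. The even/odd dichotomy in the definition of $h_x$ (factor $\tfrac{1}{N+1}$ for even $N$ and $\tfrac{1}{N+2}$ for odd $N$) reflects the strength-preservation constraint on how the $N$ direct edges between $t(x)$ and $s(x)$ can be repartitioned, which behaves differently depending on the parity of $N$. Checking that these apparently ad-hoc coefficients produce exact cancellation is the delicate step; it reduces to a short case analysis on the parity of $N$ and on whether the interacting split or edge-addition acts on the multi-edge side or the single-edge side of the good vertex.
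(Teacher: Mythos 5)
Your skeleton (writing $d^0=\sum_y d^0_y$, $h=\sum_x h_x$, cancelling distant pairs, and analysing the close interactions) is the same as the paper's, but the mechanism you propose for the close interactions is not tenable, and it is exactly there that the content of the lemma sits. Your Type (B) terms produce \emph{only} scalar multiples of $\Gamma$: after a strength-preserving splitting at $y$, the operator $h_{v+1}$ annihilates every term in which the new vertex is not good, and when it is good the contraction returns $\Gamma$ itself, with coefficient $\frac12\binom{N(e)}{2}\cdot\frac1{N(e)}$ or $\frac12\binom{N(e)}{2}\cdot\frac1{N(e)-1}=\frac12 S(e)$. Hence there is nothing in Type (B) that could absorb the non-$\Gamma$ graphs produced by Type (A), and your claim that ``the remaining outputs pair up and cancel between the two types'' cannot be the cancellation mechanism; likewise Type (A) cannot contribute part of the $2\lfloor M/2\rfloor$ copies of $\Gamma$, since the whole coefficient $2S(\Gamma)$ is already accounted for by the split-and-contract terms. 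What actually has to be shown is that each close sum $d^0_yh_x\Gamma+h_xd^0_y\Gamma$ with $x$ good and $y\in\{t(x),s(x)\}$ vanishes identically on its own. This is the delicate step: one must sum over all strength-preserving redistributions of the $N$-fold edge under the splitting of $t(x)$, separately for even and odd $N$, and verify identities of the type $\binom{2N}{2k}\frac{1}{2N-2k+1}+\binom{2N}{2N-2k+2}\frac{1}{2k-1}=\frac{1}{2N+1}\binom{2N+2}{2k}$, which is precisely what the coefficients $\frac1{N+1},\frac1{N+2}$ in $h$ are engineered for. Your ``short case analysis on the parity of $N$'' does not engage with this computation, so the central cancellation is asserted rather than proved.

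There is a second gap in the distant-pair cancellation. You exclude from the ``disjoint'' case the situation where the splitting at $y$ creates a good vertex at an \emph{old} vertex $x\neq y$ (so that $h_xd^0_y\Gamma\neq 0$ while $d^0_yh_x\Gamma=0$), but this situation then reappears nowhere in your Types (A) and (B), so if it occurred those terms would be unmatched. One must show it cannot occur: $d^0_y$ does not change valences away from $y$ and cannot create multiple edges, so the only danger is a trivalent $x$ whose three edges form a triple edge to $y$; one-vertex irreducibility then forces the whole graph to be $\Theta$, which has been split off from $\GCn_n$ beforehand. Relatedly, you never use non-separability or the exclusion of $\Theta$ and of the two-vertex double edge at all, yet they are needed once more in your Type (B): when $N(e)=2$ and $e$ carries all edges at $x$, the good-vertex-creating splitting coincides (up to relabelling) with the ``adding an edge at $x$'' term of $d^0_x$ and would be cancelled, and ruling this configuration out again uses the fact that the other endpoint of $e$ is not a separating vertex. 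Without these ingredients, and without the explicit binomial computation above, the proposal does not close the argument.
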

\begin{proof}
We compute
\begin{equation*}
\left(d^0h+hd^0\right)\Gamma = \sum_{y\in V(h(\Gamma))} d_y^0 \sum_{x\in V(\Gamma)} h_x \Gamma + \sum_{x\in V(d^0(\Gamma))} h_x \sum_{y\in V(\Gamma)} d_y^0 \Gamma =
\sum_{\substack{x,y\in V(\Gamma)\\x\neq y}}\left(d^0_yh_x\Gamma+h_xd^0_y\Gamma\right) + 2\sum_{x\in V(\Gamma)}h_xd^0_x\Gamma.
\end{equation*}
We claim that $d^0_y$ can not change the property of being good of a vertex $x\neq y$. Clearly, $d^0_y$ can not change a good vertex to become not good. On the other hand, it can not affect non-neighbors, can not change the valence of other vertices and can not produce multiple edge. Therefore, if $d^0_y$ makes $x$ good, $x$ was already trivalent with two of the edges pointing to the same vertex before acting of $d^0_y$. The only possibility when $x$ was not good before acting is when $x$ was a trivalent neighbour of $y$ all of whose three edges form a triple edge towards $y$, that is
\begin{tikzpicture}[baseline=-.65ex,scale=.5]
 \node[int] (a) at (0,0) {};
 \node[int] (b) at (1,0) {};
 \node[below] at (a) {$\scriptstyle y$};
 \node[below] at (b) {$\scriptstyle x$};
 \draw (a) edge (b);
 \draw (a) edge[bend left=35] (b);
 \draw (a) edge[bend right=35] (b);
 \draw (a) edge (-.3,0.3);
 \draw (a) edge (-.4,0.1);
 \draw (a) edge (-.3,-0.3);
 \draw (a) edge (-.4,-0.1);
\end{tikzpicture}.
But $y$ can not be a separating vertex and hence can not be connected to anything else than $x$, and the whole graph is the theta graph $\Theta_n$. But this graph has been explicitly excluded. Therefore
\begin{multline}
\label{deco}
\left(d^0h+hd^0\right)\Gamma =
\sum_{\substack{x\in V(\Gamma)\\x\text{ good}}}\sum_{\substack{y\in V(\Gamma) \\ y\neq x}} \left(d^0_yh_x\Gamma+h_xd^0_y\Gamma\right) + 2\sum_{x\in V(\Gamma)}h_xd^0_x\Gamma = \\
= \sum_{\substack{x\in V(\Gamma)\\x\text{ good}}}\sum_{\substack{y\in V(\Gamma) \\ y\notin\{x,t(x),s(x)\} }} \left(d^0_yh_x\Gamma+h_xd^0_y\Gamma\right) + 
\sum_{\substack{x\in V(\Gamma)\\x\text{ good}}}\left(d^0_{t(x)}h_x\Gamma+h_xd^0_{t(x)}\Gamma\right) + 
\sum_{\substack{x\in V(\Gamma)\\x\text{ good}}}\left(d^0_{s(x)}h_x\Gamma+h_xd^0_{s(x)}\Gamma\right) + 2\sum_{x\in V(\Gamma)}h_xd^0_x\Gamma.
\end{multline}
The first term is trivially zero. We claim that the second term is also zero. It is enough to assume that $x$ is the last vertex $v$. We consider separately the cases of odd and even numbers of ``bridging'' vertices between $s(v)$ and $t(v)$.  First, in the even case:
\begin{equation*}
\begin{tikzpicture}[baseline=-.65ex,scale=.8]
 \node[int] (a) at (0,0) {};
 \node[int] (b) at (1,0) {};
 \node[int] (c) at (2,0) {};
 \node[below] at (a) {$\scriptstyle t(v)$};
 \node[below] at (b) {$\scriptstyle v$};
 \node[below] at (c) {$\scriptstyle s(v)$};
 \draw (a) edge[bend left=20] (b);
 \draw (a) edge[bend right=20] (b);
 \draw (a) edge[very thick,bend left=50] node[above] {$\scriptstyle 2N$} (c);
 \draw (b) edge[<-] (c);
 \draw (a) edge (-.3,0.3);
 \draw (a) edge (-.4,0.1);
 \draw (a) edge (-.3,-0.3);
 \draw (a) edge (-.4,-0.1);
 \draw (c) edge (2.3,0.3);
 \draw (c) edge (2.4,0.1);
 \draw (c) edge (2.3,-0.3);
 \draw (c) edge (2.4,-0.1);
\end{tikzpicture}\quad\mxto{h_v}\quad\frac{1}{2N+1}\quad
\begin{tikzpicture}[baseline=-.65ex,scale=.8]
 \node[int] (a) at (0,0) {};
 \node[int] (b) at (1.5,0) {};
 \node[below] at (a) {$\scriptstyle t(v)$};
 \node[below] at (b) {$\scriptstyle s(v)$};
 \draw (a) edge[very thick] node[above] {$\scriptstyle 2N+2$} (b);
 \draw (a) edge (-.3,0.3);
 \draw (a) edge (-.4,0.1);
 \draw (a) edge (-.3,-0.3);
 \draw (a) edge (-.4,-0.1);
 \draw (b) edge (1.8,0.3);
 \draw (b) edge (1.9,0.1);
 \draw (b) edge (1.8,-0.3);
 \draw (b) edge (1.9,-0.1);
\end{tikzpicture}\quad\mxto{d^0_{t(v)}}\quad\frac{1}{4N+2}\sum_{k=0}^{N+1}\binom{2N+2}{2k}\sum_{conn}\;
\begin{tikzpicture}[baseline=1ex,scale=.8]
 \node[int] (a) at (0,0) {};
 \node[int] (b) at (2,0) {};
 \node[int] (d) at (0,1) {};
 \node[below] at (a) {$\scriptstyle t(v)$};
 \node[above] at (b) {$\scriptstyle s(v)$};
 \node[above] at (d) {$\scriptstyle v$};
 \draw (a) edge[->] (d);
 \draw (a) edge[very thick] node[below] {$\scriptstyle 2N+2-2k$} (b);
 \draw (d) edge[very thick] node[above] {$\scriptstyle 2k$} (b);
 \draw (a) edge (-.4,0.1);
 \draw (a) edge (-.4,-0.1);
 \draw (d) edge (-.4,1.1);
 \draw (d) edge (-.4,0.9);
 \draw (b) edge (2.3,0.3);
 \draw (b) edge (2.4,0.1);
 \draw (b) edge (2.3,-0.3);
 \draw (b) edge (2.4,-0.1);
\end{tikzpicture},
\end{equation*}
\begin{multline*}
\begin{tikzpicture}[baseline=-.65ex,scale=.8]
 \node[int] (a) at (0,0) {};
 \node[int] (b) at (1,0) {};
 \node[int] (c) at (2,0) {};
 \node[below] at (a) {$\scriptstyle t(v)$};
 \node[below] at (b) {$\scriptstyle v$};
 \node[below] at (c) {$\scriptstyle s(v)$};
 \draw (a) edge[bend left=20] (b);
 \draw (a) edge[bend right=20] (b);
 \draw (a) edge[very thick,bend left=50] node[above] {$\scriptstyle 2N$} (c);
 \draw (b) edge[<-] (c);
 \draw (a) edge (-.3,0.3);
 \draw (a) edge (-.4,0.1);
 \draw (a) edge (-.3,-0.3);
 \draw (a) edge (-.4,-0.1);
 \draw (c) edge (2.3,0.3);
 \draw (c) edge (2.4,0.1);
 \draw (c) edge (2.3,-0.3);
 \draw (c) edge (2.4,-0.1);
\end{tikzpicture}\quad\mxto{d^0_{t(v)}}\quad\sum_{k=0}^N\binom{2N}{2k}
\sum_{conn}\;
\begin{tikzpicture}[baseline=1ex,scale=.8]
 \node[int] (a) at (0,0) {};
 \node[int] (b) at (1,0) {};
 \node[int] (c) at (2,0) {};
 \node[int] (d) at (0,1) {};
 \node[below] at (a) {$\scriptstyle t(v)$};
 \node[below] at (b) {$\scriptstyle v$};
 \node[above] at (c) {$\scriptstyle s(v)$};
 \node[above] at (d) {$\scriptstyle v+1$};
 \draw (a) edge[bend left=20] (b);
 \draw (a) edge[bend right=20] (b);
 \draw (a) edge[->] (d);
 \draw (a) edge[very thick,bend right=50] node[below] {$\scriptstyle 2N-2k$} (c);
 \draw (d) edge[very thick] node[above] {$\scriptstyle 2k$} (c);
 \draw (b) edge[<-] (c);
 \draw (a) edge (-.4,0.1);
 \draw (a) edge (-.4,-0.1);
 \draw (d) edge (-.4,1.1);
 \draw (d) edge (-.4,0.9);
 \draw (c) edge (2.3,0.3);
 \draw (c) edge (2.4,0.1);
 \draw (c) edge (2.3,-0.3);
 \draw (c) edge (2.4,-0.1);
\end{tikzpicture}\quad\mxto{h_v}\;\;-\sum_{k=0}^N\binom{2N}{2k}\frac{1}{2N-2k+1}\sum_{conn}\;
\begin{tikzpicture}[baseline=1ex,scale=.8]
 \node[int] (a) at (0,0) {};
 \node[int] (b) at (2,0) {};
 \node[int] (d) at (0,1) {};
 \node[below] at (a) {$\scriptstyle t(v)$};
 \node[above] at (b) {$\scriptstyle s(v)$};
 \node[above] at (d) {$\scriptstyle v$};
 \draw (a) edge[->] (d);
 \draw (a) edge[very thick] node[below] {$\scriptstyle 2N-2k+2$} (b);
 \draw (d) edge[very thick] node[above] {$\scriptstyle 2k$} (b);
 \draw (a) edge (-.4,0.1);
 \draw (a) edge (-.4,-0.1);
 \draw (d) edge (-.4,1.1);
 \draw (d) edge (-.4,0.9);
 \draw (b) edge (2.3,0.3);
 \draw (b) edge (2.4,0.1);
 \draw (b) edge (2.3,-0.3);
 \draw (b) edge (2.4,-0.1);
\end{tikzpicture}\;=\\
=-\frac{1}{2}\left[\sum_{k=0}^{N+1}\binom{2N}{2k}\frac{1}{2N-2k+1}\sum_{conn}\;
\begin{tikzpicture}[baseline=1ex,scale=.8]
 \node[int] (a) at (0,0) {};
 \node[int] (b) at (2,0) {};
 \node[int] (d) at (0,1) {};
 \node[below] at (a) {$\scriptstyle t(v)$};
 \node[above] at (b) {$\scriptstyle s(v)$};
 \node[above] at (d) {$\scriptstyle v$};
 \draw (a) edge[->] (d);
 \draw (a) edge[very thick] node[below] {$\scriptstyle 2N-2k+2$} (b);
 \draw (d) edge[very thick] node[above] {$\scriptstyle 2k$} (b);
 \draw (a) edge (-.4,0.1);
 \draw (a) edge (-.4,-0.1);
 \draw (d) edge (-.4,1.1);
 \draw (d) edge (-.4,0.9);
 \draw (b) edge (2.3,0.3);
 \draw (b) edge (2.4,0.1);
 \draw (b) edge (2.3,-0.3);
 \draw (b) edge (2.4,-0.1);
\end{tikzpicture}\;
+\sum_{k=0}^{N+1}\binom{2N}{2k}\frac{1}{2N-2k+1}\sum_{conn}\;
\begin{tikzpicture}[baseline=1ex,scale=.8]
 \node[int] (a) at (0,0) {};
 \node[int] (b) at (2,0) {};
 \node[int] (d) at (0,1) {};
 \node[below] at (a) {$\scriptstyle v$};
 \node[above] at (b) {$\scriptstyle s(v)$};
 \node[above] at (d) {$\scriptstyle t(v)$};
 \draw (a) edge[<-] (d);
 \draw (a) edge[very thick] node[below] {$\scriptstyle 2N-2k+2$} (b);
 \draw (d) edge[very thick] node[above] {$\scriptstyle 2k$} (b);
 \draw (a) edge (-.4,0.1);
 \draw (a) edge (-.4,-0.1);
 \draw (d) edge (-.4,1.1);
 \draw (d) edge (-.4,0.9);
 \draw (b) edge (2.3,0.3);
 \draw (b) edge (2.4,0.1);
 \draw (b) edge (2.3,-0.3);
 \draw (b) edge (2.4,-0.1);
\end{tikzpicture}\right]=\\
=-\frac{1}{2}\sum_{k=0}^{N+1}\left[\binom{2N}{2k}\frac{1}{2N-2k+1}+\binom{2N}{2N-2k+2}\frac{1}{2k-1}\right]\sum_{conn}\;
\begin{tikzpicture}[baseline=1ex,scale=.8]
 \node[int] (a) at (0,0) {};
 \node[int] (b) at (2,0) {};
 \node[int] (d) at (0,1) {};
 \node[below] at (a) {$\scriptstyle t(v)$};
 \node[above] at (b) {$\scriptstyle s(v)$};
 \node[above] at (d) {$\scriptstyle v$};
 \draw (a) edge[->] (d);
 \draw (a) edge[very thick] node[below] {$\scriptstyle 2N-2k+2$} (b);
 \draw (d) edge[very thick] node[above] {$\scriptstyle 2k$} (b);
 \draw (a) edge (-.4,0.1);
 \draw (a) edge (-.4,-0.1);
 \draw (d) edge (-.4,1.1);
 \draw (d) edge (-.4,0.9);
 \draw (b) edge (2.3,0.3);
 \draw (b) edge (2.4,0.1);
 \draw (b) edge (2.3,-0.3);
 \draw (b) edge (2.4,-0.1);
\end{tikzpicture}\;=\\
=\frac{-1}{4N+2}\sum_{k=0}^{N+1}\binom{2N+2}{2k}\sum_{conn}\;
\begin{tikzpicture}[baseline=1ex,scale=.8]
 \node[int] (a) at (0,0) {};
 \node[int] (b) at (2,0) {};
 \node[int] (d) at (0,1) {};
 \node[below] at (a) {$\scriptstyle t(v)$};
 \node[above] at (b) {$\scriptstyle s(v)$};
 \node[above] at (d) {$\scriptstyle v$};
 \draw (a) edge[->] (d);
 \draw (a) edge[very thick] node[below] {$\scriptstyle 2N+2-2k$} (b);
 \draw (d) edge[very thick] node[above] {$\scriptstyle 2k$} (b);
 \draw (a) edge (-.4,0.1);
 \draw (a) edge (-.4,-0.1);
 \draw (d) edge (-.4,1.1);
 \draw (d) edge (-.4,0.9);
 \draw (b) edge (2.3,0.3);
 \draw (b) edge (2.4,0.1);
 \draw (b) edge (2.3,-0.3);
 \draw (b) edge (2.4,-0.1);
\end{tikzpicture},
\end{multline*}
where $\sum_{conn}$ is the sum over all possibilities of connecting remaining edges of $v(t)$ to new vertices. We have omitted the term of ''Adding an edge" at $t(v)$ in the action of $d^0_{t(v)}$, but it trivially cancels. For an odd number of ``bridging'' edges the situation is similar:
\begin{equation*}
\begin{tikzpicture}[baseline=-.65ex,scale=.8]
 \node[int] (a) at (0,0) {};
 \node[int] (b) at (1,0) {};
 \node[int] (c) at (2,0) {};
 \node[below] at (a) {$\scriptstyle t(v)$};
 \node[below] at (b) {$\scriptstyle v$};
 \node[below] at (c) {$\scriptstyle s(v)$};
 \draw (a) edge[bend left=20] (b);
 \draw (a) edge[bend right=20] (b);
 \draw (a) edge[very thick,bend left=50] node[above] {$\scriptstyle 2N-1$} (c);
 \draw (b) edge[<-] (c);
 \draw (a) edge (-.3,0.3);
 \draw (a) edge (-.4,0.1);
 \draw (a) edge (-.3,-0.3);
 \draw (a) edge (-.4,-0.1);
 \draw (c) edge (2.3,0.3);
 \draw (c) edge (2.4,0.1);
 \draw (c) edge (2.3,-0.3);
 \draw (c) edge (2.4,-0.1);
\end{tikzpicture}\quad\mxto{h_v}\quad\frac{1}{2N+1}\quad
\begin{tikzpicture}[baseline=-.65ex,scale=.8]
 \node[int] (a) at (0,0) {};
 \node[int] (b) at (1.5,0) {};
 \node[below] at (a) {$\scriptstyle t(v)$};
 \node[below] at (b) {$\scriptstyle s(v)$};
 \draw (a) edge[very thick] node[above] {$\scriptstyle 2N+1$} (b);
 \draw (a) edge (-.3,0.3);
 \draw (a) edge (-.4,0.1);
 \draw (a) edge (-.3,-0.3);
 \draw (a) edge (-.4,-0.1);
 \draw (b) edge (1.8,0.3);
 \draw (b) edge (1.9,0.1);
 \draw (b) edge (1.8,-0.3);
 \draw (b) edge (1.9,-0.1);
\end{tikzpicture}\quad\mxto{d^0_{t(v)}}\quad\frac{1}{4N+2}\sum_{k=0}^{2N+1}\binom{2N+1}{k}\sum_{conn}\;
\begin{tikzpicture}[baseline=1ex,scale=.8]
 \node[int] (a) at (0,0) {};
 \node[int] (b) at (2,0) {};
 \node[int] (d) at (0,1) {};
 \node[below] at (a) {$\scriptstyle t(v)$};
 \node[above] at (b) {$\scriptstyle s(v)$};
 \node[above] at (d) {$\scriptstyle v$};
 \draw (a) edge[->] (d);
 \draw (a) edge[very thick] node[below] {$\scriptstyle 2N+1-k$} (b);
 \draw (d) edge[very thick] node[above] {$\scriptstyle k$} (b);
 \draw (a) edge (-.4,0.1);
 \draw (a) edge (-.4,-0.1);
 \draw (d) edge (-.4,1.1);
 \draw (d) edge (-.4,0.9);
 \draw (b) edge (2.3,0.3);
 \draw (b) edge (2.4,0.1);
 \draw (b) edge (2.3,-0.3);
 \draw (b) edge (2.4,-0.1);
\end{tikzpicture},
\end{equation*}
\begin{multline*}
\begin{tikzpicture}[baseline=-.65ex,scale=.8]
 \node[int] (a) at (0,0) {};
 \node[int] (b) at (1,0) {};
 \node[int] (c) at (2,0) {};
 \node[below] at (a) {$\scriptstyle t(v)$};
 \node[below] at (b) {$\scriptstyle v$};
 \node[below] at (c) {$\scriptstyle s(v)$};
 \draw (a) edge[bend left=20] (b);
 \draw (a) edge[bend right=20] (b);
 \draw (a) edge[very thick,bend left=50] node[above] {$\scriptstyle 2N-1$} (c);
 \draw (b) edge[<-] (c);
 \draw (a) edge (-.3,0.3);
 \draw (a) edge (-.4,0.1);
 \draw (a) edge (-.3,-0.3);
 \draw (a) edge (-.4,-0.1);
 \draw (c) edge (2.3,0.3);
 \draw (c) edge (2.4,0.1);
 \draw (c) edge (2.3,-0.3);
 \draw (c) edge (2.4,-0.1);
\end{tikzpicture}\quad\mxto{d^0_{t(v)}}\quad\sum_{k=0}^{2N-1}\binom{2N-1}{k}\sum_{conn}\;
\begin{tikzpicture}[baseline=1ex,scale=.8]
 \node[int] (a) at (0,0) {};
 \node[int] (b) at (1,0) {};
 \node[int] (c) at (2,0) {};
 \node[int] (d) at (0,1) {};
 \node[below] at (a) {$\scriptstyle t(v)$};
 \node[below] at (b) {$\scriptstyle v$};
 \node[above] at (c) {$\scriptstyle s(v)$};
 \node[above] at (d) {$\scriptstyle v+1$};
 \draw (a) edge[bend left=20] (b);
 \draw (a) edge[bend right=20] (b);
 \draw (a) edge[->] (d);
 \draw (a) edge[very thick,bend right=50] node[below] {$\scriptstyle 2N-1-k$} (c);
 \draw (d) edge[very thick] node[above] {$\scriptstyle k$} (c);
 \draw (b) edge[<-] (c);
 \draw (a) edge (-.4,0.1);
 \draw (a) edge (-.4,-0.1);
 \draw (d) edge (-.4,1.1);
 \draw (d) edge (-.4,0.9);
 \draw (c) edge (2.3,0.3);
 \draw (c) edge (2.4,0.1);
 \draw (c) edge (2.3,-0.3);
 \draw (c) edge (2.4,-0.1);
\end{tikzpicture}\quad\mxto{h_v}\\
\mxto{h_v}\quad
-\sum_{k=0}^{N-1}\binom{2N-1}{2k}\frac{1}{2N-2k+1}\sum_{conn}\;
\begin{tikzpicture}[baseline=1ex,scale=.8]
 \node[int] (a) at (0,0) {};
 \node[int] (b) at (2,0) {};
 \node[int] (d) at (0,1) {};
 \node[below] at (a) {$\scriptstyle t(v)$};
 \node[above] at (b) {$\scriptstyle s(v)$};
 \node[above] at (d) {$\scriptstyle v$};
 \draw (a) edge[->] (d);
 \draw (a) edge[very thick] node[below] {$\scriptstyle 2N-2k+1$} (b);
 \draw (d) edge[very thick] node[above] {$\scriptstyle 2k$} (b);
 \draw (a) edge (-.4,0.1);
 \draw (a) edge (-.4,-0.1);
 \draw (d) edge (-.4,1.1);
 \draw (d) edge (-.4,0.9);
 \draw (b) edge (2.3,0.3);
 \draw (b) edge (2.4,0.1);
 \draw (b) edge (2.3,-0.3);
 \draw (b) edge (2.4,-0.1);
\end{tikzpicture}\;\;-\sum_{k=0}^{N-1}\binom{2N-1}{2k+1}\frac{1}{2N-2k-1}\sum_{conn}\;
\begin{tikzpicture}[baseline=1ex,scale=.8]
 \node[int] (a) at (0,0) {};
 \node[int] (b) at (2,0) {};
 \node[int] (d) at (0,1) {};
 \node[below] at (a) {$\scriptstyle t(v)$};
 \node[above] at (b) {$\scriptstyle s(v)$};
 \node[above] at (d) {$\scriptstyle v$};
 \draw (a) edge[->] (d);
 \draw (a) edge[very thick] node[below] {$\scriptstyle 2N-2k$} (b);
 \draw (d) edge[very thick] node[above] {$\scriptstyle 2k+1$} (b);
 \draw (a) edge (-.4,0.1);
 \draw (a) edge (-.4,-0.1);
 \draw (d) edge (-.4,1.1);
 \draw (d) edge (-.4,0.9);
 \draw (b) edge (2.3,0.3);
 \draw (b) edge (2.4,0.1);
 \draw (b) edge (2.3,-0.3);
 \draw (b) edge (2.4,-0.1);
\end{tikzpicture}\;=\\
=-\sum_{k=0}^N\left[\binom{2N-1}{2k}\frac{1}{2N-2k+1}+\binom{2N-1}{2N-2k+1}\frac{1}{2k-1}\right]\sum_{conn}\;
\begin{tikzpicture}[baseline=1ex,scale=.8]
 \node[int] (a) at (0,0) {};
 \node[int] (b) at (2,0) {};
 \node[int] (d) at (0,1) {};
 \node[below] at (a) {$\scriptstyle t(v)$};
 \node[above] at (b) {$\scriptstyle s(v)$};
 \node[above] at (d) {$\scriptstyle v$};
 \draw (a) edge[->] (d);
 \draw (a) edge[very thick] node[below] {$\scriptstyle 2N-2k+1$} (b);
 \draw (d) edge[very thick] node[above] {$\scriptstyle 2k$} (b);
 \draw (a) edge (-.4,0.1);
 \draw (a) edge (-.4,-0.1);
 \draw (d) edge (-.4,1.1);
 \draw (d) edge (-.4,0.9);
 \draw (b) edge (2.3,0.3);
 \draw (b) edge (2.4,0.1);
 \draw (b) edge (2.3,-0.3);
 \draw (b) edge (2.4,-0.1);
\end{tikzpicture}\;=\\
=\frac{-1}{2N+1}\sum_{k=0}^N\binom{2N+1}{2k}
\sum_{conn}\;
\begin{tikzpicture}[baseline=1ex,scale=.8]
 \node[int] (a) at (0,0) {};
 \node[int] (b) at (2,0) {};
 \node[int] (d) at (0,1) {};
 \node[below] at (a) {$\scriptstyle t(v)$};
 \node[above] at (b) {$\scriptstyle s(v)$};
 \node[above] at (d) {$\scriptstyle v$};
 \draw (a) edge[->] (d);
 \draw (a) edge[very thick] node[below] {$\scriptstyle 2N-2k+1$} (b);
 \draw (d) edge[very thick] node[above] {$\scriptstyle 2k$} (b);
 \draw (a) edge (-.4,0.1);
 \draw (a) edge (-.4,-0.1);
 \draw (d) edge (-.4,1.1);
 \draw (d) edge (-.4,0.9);
 \draw (b) edge (2.3,0.3);
 \draw (b) edge (2.4,0.1);
 \draw (b) edge (2.3,-0.3);
 \draw (b) edge (2.4,-0.1);
\end{tikzpicture}\;=
\frac{-1}{4N+2}\sum_{k=0}^{2N+1}\binom{2N+1}{k}\sum_{conn}\;
\begin{tikzpicture}[baseline=1ex,scale=.8]
 \node[int] (a) at (0,0) {};
 \node[int] (b) at (2,0) {};
 \node[int] (d) at (0,1) {};
 \node[below] at (a) {$\scriptstyle t(v)$};
 \node[above] at (b) {$\scriptstyle s(v)$};
 \node[above] at (d) {$\scriptstyle v$};
 \draw (a) edge[->] (d);
 \draw (a) edge[very thick] node[below] {$\scriptstyle 2N+1-k$} (b);
 \draw (d) edge[very thick] node[above] {$\scriptstyle k$} (b);
 \draw (a) edge (-.4,0.1);
 \draw (a) edge (-.4,-0.1);
 \draw (d) edge (-.4,1.1);
 \draw (d) edge (-.4,0.9);
 \draw (b) edge (2.3,0.3);
 \draw (b) edge (2.4,0.1);
 \draw (b) edge (2.3,-0.3);
 \draw (b) edge (2.4,-0.1);
\end{tikzpicture}.
\end{multline*}
Interchanging
\begin{tikzpicture}[baseline=-.65ex,scale=.5]
 \node[int] (a) at (0,0) {};
 \node[int] (b) at (1,0) {};
 \node[int] (c) at (2,0) {};
 \draw (a) edge[bend left=20] (b);
 \draw (a) edge[bend right=20] (b);
 \draw (b) edge[<-] (c);
\end{tikzpicture} with 
\begin{tikzpicture}[baseline=-.65ex,scale=.5]
 \node[int] (a) at (2,0) {};
 \node[int] (b) at (1,0) {};
 \node[int] (c) at (0,0) {};
 \draw (a) edge[bend left=20] (b);
 \draw (a) edge[bend right=20] (b);
 \draw (b) edge[<-] (c);
\end{tikzpicture},
$s(v)$ with $t(v)$ and vice versa in the above calculations leads to the conclusion that the third term in \eqref{deco} is $d^0_xh_x\Gamma+h_xd^0_{s(x)}\Gamma=0$.
The remaining term is:
\begin{equation*}
\left(d^0h+hd^0\right)\Gamma = 2\sum_{x\in V(\Gamma)}h_xd^0_x\Gamma = 2\sum_{x\in V(\Gamma)}h_{v+1}d^0_x\Gamma.
\end{equation*}
It suffices to consider terms for which $d^0_x$ makes the new vertex $v+1$ good, otherwise the term is zero. Vertex $v+1$ in $d^0_x\Gamma$ has a single edge towards $x$, so it is good if and only if a multiple edge $e$ with $N(e)\geq 2$ has been split into a double edge heading towards $v+1$ and an $(N(e)-2)$-fold edge heading towards the new $x$, and all other edges heading towards the new $x$, i.~e.\
\begin{equation*}
\begin{tikzpicture}[baseline=-.65ex,scale=.8]
 \node[int] (b) at (0,0) {};
 \node[int] (a) at (1.5,0) {};
 \node[below] at (b) {$\scriptstyle x$};
 \draw (a) edge[very thick] node[above] {$\scriptstyle N$} (b);
 \draw (b) edge (-.3,0.3);
 \draw (b) edge (-.4,0.1);
 \draw (b) edge (-.3,-0.3);
 \draw (b) edge (-.4,-0.1);
 \draw (a) edge (1.8,0.3);
 \draw (a) edge (1.9,0.1);
 \draw (a) edge (1.8,-0.3);
 \draw (a) edge (1.9,-0.1);
\end{tikzpicture}\quad\longmapsto\quad\frac{1}{2}\binom{N}{2}\quad
\begin{tikzpicture}[baseline=-.65ex,scale=.8]
 \node[int] (c) at (0,0) {};
 \node[int] (b) at (1,0) {};
 \node[int] (a) at (2,0) {};
 \node[below] at (c) {$\scriptstyle x$};
 \node[below] at (b) {$\scriptstyle v+1$};
 \draw (a) edge[bend left=20] (b);
 \draw (a) edge[bend right=20] (b);
 \draw (a) edge[very thick,bend right=50] node[above] {$\scriptstyle N-2$} (c);
 \draw (b) edge[<-] (c);
 \draw (c) edge (-.3,0.3);
 \draw (c) edge (-.4,0.1);
 \draw (c) edge (-.3,-0.3);
 \draw (c) edge (-.4,-0.1);
 \draw (a) edge (2.3,0.3);
 \draw (a) edge (2.4,0.1);
 \draw (a) edge (2.3,-0.3);
 \draw (a) edge (2.4,-0.1);
\end{tikzpicture}\, .
\end{equation*}
We have to check that this term is not cancelled by ''Adding an edge", i.e.\ that it is not at the same time true that $N=2$ and that there are no other edges towards $x$. But in that case, since the other end of the $N$-fold edge is not separating, the whole graph would be
$\begin{tikzpicture}[baseline=-.65ex,scale=.5]
 \node[int] (a) at (0,0) {};
 \node[int] (b) at (1,0) {};
 \draw (a) edge[bend left=20] (b);
 \draw (a) edge[bend right=20] (b);
\end{tikzpicture}=0$. Therefore
\begin{multline*}
\begin{tikzpicture}[baseline=-.65ex,scale=.8]
 \node[int] (a) at (0,0) {};
 \node[int] (b) at (1,0) {};
 \node[int] (c) at (.6,.8) {};
 \node[int] (d) at (.6,-.8) {};
 \draw (a) edge[very thick] (b);
 \draw (a) edge[very thick] (c);
 \draw (a) edge[very thick] (d);
 \node[below] at (a) {$\scriptstyle x$};
 \draw (a) edge (-.3,0.3);
 \draw (a) edge (-.4,0.1);
 \draw (a) edge (-.3,-0.3);
 \draw (a) edge (-.4,-0.1);
 \draw (b) edge (1.3,0.3);
 \draw (b) edge (1.4,0.1);
 \draw (b) edge (1.3,-0.3);
 \draw (b) edge (1.4,-0.1);
 \draw (c) edge (.6,1.2);
 \draw (c) edge (.8,1.16);
 \draw (c) edge (.95,1.05);
 \draw (c) edge (1,.85);
 \draw (d) edge (.6,-1.2);
 \draw (d) edge (.8,-1.16);
 \draw (d) edge (.95,-1.05);
 \draw (d) edge (1,-.85);
\end{tikzpicture}\quad\mxto{d^0_x}\sum_{e \text{ multiple edge at }x} \frac{1}{2}\binom{N(e)}{2}\quad
\begin{tikzpicture}[baseline=-.65ex,scale=.8]
 \node[int] (c) at (0,0) {};
 \node[int] (b) at (1,0) {};
 \node[int] (a) at (2,0) {};
 \node[int] (e) at (.2,.9) {};
 \node[int] (f) at (.2,-.9) {};
 \draw (c) edge[very thick] (e);
 \draw (c) edge[very thick] (f);
 \node[below right] at (c) {$\scriptstyle x$};
 \node[below] at (b) {$\scriptstyle v+1$};
 \draw (a) edge[bend left=20] (b);
 \draw (a) edge[bend right=20] (b);
 \draw (a) edge[very thick,bend right=50] node[above] {$\scriptstyle N(e)-2$} (c);
 \draw (b) edge[<-] (c);
 \draw (c) edge (-.3,0.3);
 \draw (c) edge (-.4,0.1);
 \draw (c) edge (-.3,-0.3);
 \draw (c) edge (-.4,-0.1);
 \draw (a) edge (2.3,0.3);
 \draw (a) edge (2.4,0.1);
 \draw (a) edge (2.3,-0.3);
 \draw (a) edge (2.4,-0.1);
 \draw (e) edge (0,1.25);
 \draw (e) edge (.2,1.3);
 \draw (e) edge (.4,1.25);
 \draw (e) edge (.55,1.15);
 \draw (f) edge (0,-1.25);
 \draw (f) edge (.2,-1.3);
 \draw (f) edge (.4,-1.25);
 \draw (f) edge (.55,-1.15);
\end{tikzpicture}\;+\text{(something where $v+1$ is not good)}\;\mxto{h_{v+1}}\\
\mxto{h_{v+1}}\sum_{e \text{ multiple edge at }x} \frac{1}{2}\binom{N(e)}{2}
\begin{cases}
 \frac{1}{N(e)} & \text{if $N$ odd} \\
 \frac{1}{N(e)-1} & \text{if $N$ even}
\end{cases}\quad
\begin{tikzpicture}[baseline=-.65ex,scale=.8]
 \node[int] (a) at (0,0) {};
 \node[int] (b) at (1,0) {};
 \node[int] (c) at (.6,.8) {};
 \node[int] (d) at (.6,-.8) {};
 \draw (a) edge[very thick] (b);
 \draw (a) edge[very thick] (c);
 \draw (a) edge[very thick] (d);
 \node[below] at (a) {$\scriptstyle x$};
 \draw (a) edge (-.3,0.3);
 \draw (a) edge (-.4,0.1);
 \draw (a) edge (-.3,-0.3);
 \draw (a) edge (-.4,-0.1);
 \draw (b) edge (1.3,0.3);
 \draw (b) edge (1.4,0.1);
 \draw (b) edge (1.3,-0.3);
 \draw (b) edge (1.4,-0.1);
 \draw (c) edge (.6,1.2);
 \draw (c) edge (.8,1.16);
 \draw (c) edge (.95,1.05);
 \draw (c) edge (1,.85);
 \draw (d) edge (.6,-1.2);
 \draw (d) edge (.8,-1.16);
 \draw (d) edge (.95,-1.05);
 \draw (d) edge (1,-.85);
\end{tikzpicture}\; \;= \;\;\frac{1}{2}
\sum_{e \text{ multiple edge at }x}S(e)\quad
\begin{tikzpicture}[baseline=-.65ex,scale=.8]
 \node[int] (a) at (0,0) {};
 \node[int] (b) at (1,0) {};
 \node[int] (c) at (.6,.8) {};
 \node[int] (d) at (.6,-.8) {};
 \draw (a) edge[very thick] (b);
 \draw (a) edge[very thick] (c);
 \draw (a) edge[very thick] (d);
 \node[below] at (a) {$\scriptstyle x$};
 \draw (a) edge (-.3,0.3);
 \draw (a) edge (-.4,0.1);
 \draw (a) edge (-.3,-0.3);
 \draw (a) edge (-.4,-0.1);
 \draw (b) edge (1.3,0.3);
 \draw (b) edge (1.4,0.1);
 \draw (b) edge (1.3,-0.3);
 \draw (b) edge (1.4,-0.1);
 \draw (c) edge (.6,1.2);
 \draw (c) edge (.8,1.16);
 \draw (c) edge (.95,1.05);
 \draw (c) edge (1,.85);
 \draw (d) edge (.6,-1.2);
 \draw (d) edge (.8,-1.16);
 \draw (d) edge (.95,-1.05);
 \draw (d) edge (1,-.85);
\end{tikzpicture},
\end{multline*}
\begin{equation*}
\left(d^0h+hd^0\right)\Gamma  = 2\sum_{x\in V(\Gamma)}h_{v+1}d^0_x\Gamma =
2\sum_{x\in V(\Gamma)}\frac{1}{2}\sum_{e \text{ multiple edge at }x}S(e)\Gamma = 
2\sum_{e \text{ multiple edge}}S(e)\Gamma = 2S(\Gamma)\Gamma.
\end{equation*}
\end{proof}

The lemma ensures that all rows on the first page of our spectral sequences are exact, unless the total strength is $0$, that is we are in the last subcomplex $\GCn_n^\nomul$ of the filtration. Therefore, on the second page, there are all zeros except in the last row where there is $H(\GCn_n^\nomul)$, so the spectral sequences converges to $H(\GCn_n^\nomul)$, what had to be demonstrated.
\end{proof}

\section{Calculating the Euler characteristics}\label{sec:eulerchar}

\subsection{The proof of Theorem \ref{thm:dimgenfun}}
\label{ProofTm1}

Here we calculate the dimension of $V_{v,e,i}^{s_\nu s_\mu s_\rho}$ for $v,e\in\N$, $i\in\{0,1,2,3\}$, $s_\nu,s_\mu\in\{+,*,-\}$ and $s_\rho\in\{+,-\}$. It holds that
\begin{equation}
 \dim\left(V_{v,e,i}^{s_\nu s_\mu s_\rho}\right) 
 = \dim\left(\left(V_{v,e,i}^{s_\nu s_\mu}\right)_{S_v}\right) 
 = \dim\left(\left(V_{v,e,i}^{s_\nu s_\mu}\right)^{S_v}\right)
 = \frac{1}{|S_v|}\sum_{g\in S_v}\xi_{v,e,i}^{s_\nu s_\mu s_\rho}(g),
\end{equation}
where $\xi_{v,e,i}^{s_\nu s_\mu s_\rho}$ is the character of the representation $\rho^{s_\rho}$ of $S_v$ on $V_{v,e,i}^{s_\nu s_\mu}$. Furthermore, it is enough to calculate characters of conjugacy classes of $S_v$, i.e.\ on partitions $\underline{j}$ of $v$:
\begin{equation}
 \dim\left(V_{v,e,i}^{s_\nu s_\mu s_\rho}\right) =
 \frac{1}{v!}\sum_{\substack{j_1,j_2,\dots \geq 0 \\ \sum_\alpha \alpha j_\alpha=v}} \frac{v!}{\prod_\alpha j_\alpha ! \alpha^{j_\alpha}}
 \xi_{v,e,i}^{s_\nu s_\mu s_\rho}(g_{\underline{j}}),
\end{equation}
where $g_{\underline{j}}$ is any element of the conjugacy class $\underline{j}$ of $S_v$.
Let
\begin{equation}\label{equ:8}
P_{i}^{s_\nu s_\mu s_\rho}=\sum_{v,e\geq 0}\dim\left(V_{v,e,i}^{s_\nu s_\mu s_\rho}\right)s^vt^e
\end{equation}
be the generating functions. Then
\begin{equation}
P_{i}^{s_\nu s_\mu s_\rho} = 
\sum_{v,e\geq 0}\sum_{\substack{j_1,j_2,\dots \geq 0 \\ \sum_\alpha \alpha j_\alpha=v}} \frac{1}{\prod_\alpha j_\alpha ! \alpha^{j_\alpha}}
 \xi_{v,e,i}^{s_\nu s_\mu s_\rho}(g_{\underline{j}})s^vt^e =
 \sum_{j_1,j_2,\dots \geq 0} \left( \prod_{\alpha\geq 1}\frac{s^{\alpha j_\alpha}}{ j_\alpha ! \alpha^{j_\alpha}}\right)
 \xi_{\underline{j},i}^{s_\nu s_\mu s_\rho} 
\end{equation}
where
\begin{equation}
\xi_{\underline{j},i}^{s_\nu s_\mu s_\rho} :=
\sum_e \xi_{\sum_\alpha \alpha j_\alpha,e,i}^{s_\nu s_\mu s_\rho}(g_{\underline{j}})t^e
\end{equation}
is the \emph{total character}.

In the following we only consider $(s_\nu, s_\mu, s_\rho)=(+,-,+)$ or $(-,+,-)$. For the computer calculations in subsection \ref{data} we also used another possibilities without writing down the closed formula, whose calculations are similar.

\begin{lemma}
\begin{equation}
\begin{split}
\xi_{\underline{j},0}^{+-+}= & \prod_{\alpha\geq 1} (1+t^-_{2\alpha-1})^{\alpha j_{2\alpha-1}} \\
& \prod_{\alpha\geq 1} \left((1+t^-_\alpha)(1+t^-_{2\alpha})^\alpha\right)^{j_{2\alpha}} \\
& \prod_{\alpha\geq 1} (1+t^-_\alpha)^{\alpha\frac{j_\alpha(j_\alpha-1)}{2}} \\
& \prod_{\beta>\alpha\geq 1}(1+t^-_{\LCM(\alpha,\beta)})^{\GCD(\alpha,\beta)j_\alpha j_\beta},
\end{split}
\end{equation}
where $t^-_\alpha:=-(-t)^\alpha$, and
\begin{equation}
\begin{split}
\xi_{\underline{j},0}^{-+-}= & \prod_{\alpha\geq 1} \left(\frac{1}{1-t^+_{2\alpha-1}}\right)^{(\alpha-1) j_{2\alpha-1}} \\
& \prod_{\alpha\geq 1} \left(-\frac{1}{1+t^+_\alpha}\left(\frac{1}{1-t^+_{2\alpha}}\right)^{\alpha-1}\right)^{j_{2\alpha}} \\
& \prod_{\alpha\geq 1} \left(\frac{1}{1-t^+_\alpha}\right)^{\alpha\frac{j_\alpha(j_\alpha-1)}{2}} \\
& \prod_{\beta>\alpha\geq 1}\left(\frac{1}{1-t^+_{\LCM(\alpha,\beta)}}\right)^{\GCD(\alpha,\beta)j_\alpha j_\beta},
\end{split}
\end{equation}
where $t^+_\alpha:=t^\alpha$.
\end{lemma}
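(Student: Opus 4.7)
The plan is to identify each coinvariant space $V_{v,e,0}^{s_\nu s_\mu s_\rho}$ with a symmetric or exterior power of a natural $\langle g\rangle$-module, realize the generating function as a determinant, and then factor that determinant according to the cycle structure of $g$. Since an undirected edge is an element of $\mathrm{Sym}^2\K[V]$ and a signed oriented edge (tadpoles killed) is an element of $\Lambda^2\K[V]$, one has $S_v$-equivariant isomorphisms $V_{v,e,0}^{+-}\cong\Lambda^e(\mathrm{Sym}^2\K[V])$ and $V_{v,e,0}^{-+}\cong\mathrm{Sym}^e(\Lambda^2\K[V])$. The standard identities $\sum_e t^e\,\mathrm{tr}(g\mid\Lambda^e W)=\det(1+tg\mid W)$ and $\sum_e t^e\,\mathrm{tr}(g\mid\mathrm{Sym}^e W)=\det(1-tg\mid W)^{-1}$ then give
\begin{align*}
 \xi_{\underline j,0}^{+-+}&=\det\bigl(1+tg\,\big|\,\mathrm{Sym}^2\K[V]\bigr),&
 \xi_{\underline j,0}^{-+-}&=\mathrm{sgn}(g)\cdot\det\bigl(1-tg\,\big|\,\Lambda^2\K[V]\bigr)^{-1},
\end{align*}
where the prefactor $\mathrm{sgn}(g)=\prod_\alpha(-1)^{(\alpha-1)j_\alpha}$ in the second formula accounts for the twist $s_\rho=-$.

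Next, the decomposition $\K[V]=\bigoplus_i\K[C_i]$ into $\langle g\rangle$-modules indexed by the cycles $C_i$ of $g$ induces decompositions $\mathrm{Sym}^2\K[V]=\bigoplus_i\mathrm{Sym}^2\K[C_i]\oplus\bigoplus_{i<i'}\K[C_i]\otimes\K[C_{i'}]$ and similarly for $\Lambda^2$, so the determinants factor over these summands. For a cross term $\K[C_i]\otimes\K[C_{i'}]$ with $|C_i|=\alpha$ and $|C_{i'}|=\beta$, the natural basis splits into $\gcd(\alpha,\beta)$ orbits of $g$, each spanning a cyclic $\langle g\rangle$-module of dimension $\mathrm{lcm}(\alpha,\beta)$; hence its contribution to $\det(1+tg)$ is $(1-(-t)^{\mathrm{lcm}(\alpha,\beta)})^{\gcd(\alpha,\beta)}$ and to $\det(1-tg)$ is $(1-t^{\mathrm{lcm}(\alpha,\beta)})^{\gcd(\alpha,\beta)}$. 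Summing over unordered pairs of cycles precisely reproduces the cross-cycle products $\prod_\alpha(1+t^-_\alpha)^{\alpha j_\alpha(j_\alpha-1)/2}\prod_{\beta>\alpha}(1+t^-_{\mathrm{lcm}(\alpha,\beta)})^{\gcd(\alpha,\beta)j_\alpha j_\beta}$ in the statement (and analogously for $t^+$).

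The delicate step is the within-cycle analysis. For a single cycle $C$ of length $\ell$ with cyclic basis $v_0,\dots,v_{\ell-1}$, the orbits of $g$ on the natural basis of $\mathrm{Sym}^2\K[C]$ (resp.\ $\Lambda^2\K[C]$) are indexed by the shift $d\in\{0,1,\dots,\lfloor\ell/2\rfloor\}$, with $d=0$ present only for $\mathrm{Sym}^2$. For each $0<d<\ell/2$ the orbit spans a cyclic $\langle g\rangle$-module of dimension $\ell$, contributing a factor $1-(-t)^\ell$ to $\det(1+tg)$ or $1-t^\ell$ to $\det(1-tg)$; the diagonal $d=0$ likewise contributes $1-(-t)^\ell$ to the $\mathrm{Sym}^2$ determinant. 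The exceptional case is $d=\ell/2$ for $\ell$ even: only $\ell/2$ basis elements are distinct, and because the swap of the two tensor factors on this orbit coincides with the action of $g^{\ell/2}$, that operator acts as $+1$ on $\mathrm{Sym}^2\K[C]$ but as $-1$ on $\Lambda^2\K[C]$. The first case gives a cyclic $\langle g\rangle$-module of dimension $\ell/2$ with factor $1-(-t)^{\ell/2}$, while in the second case the eigenvalues of $g$ are the $\ell$-th roots of $-1$, yielding the factor $(1-t^\ell)/(1-t^{\ell/2})=1+t^{\ell/2}$. Collecting these contributions over all odd- and even-length cycles reproduces the within-cycle factors of the lemma, and verifying that $\mathrm{sgn}(g)=\prod_\alpha(-1)^{j_{2\alpha}}$ absorbs the global $-$ sign sitting in the even-cycle factor of $\xi_{\underline j,0}^{-+-}$ completes the argument. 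The main obstacle is the orbit bookkeeping for the exceptional $d=\ell/2$ case together with the consistent sign tracking in the odd formula.
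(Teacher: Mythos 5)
Your argument is correct, and it takes a genuinely different route from the paper. The paper proves the lemma by directly enumerating the graphs fixed (up to sign) by $g_{\underline j}$: it organizes the possible edges into orbits of the vertex permutation (edge-cycles inside an odd or even vertex-cycle, between two cycles of equal length, or between cycles of different lengths), checks by hand the parity of the induced edge permutation in each case, and multiplies the resulting factors $1+t^{\dots}$ (or geometric series in the $-+-$ case); moreover it only writes out the $+-+$ case and leaves $-+-$ to the reader. You instead identify $V^{+-}_{v,e,0}\cong\Lambda^e(\mathrm{Sym}^2\K[V])$ and $V^{-+}_{v,e,0}\cong\mathrm{Sym}^e(\Lambda^2\K[V])$ and invoke the identities $\sum_e t^e\,\mathrm{tr}(g\mid\Lambda^e W)=\det(1+tg\mid W)$ and $\sum_e t^e\,\mathrm{tr}(g\mid\mathrm{Sym}^e W)=\det(1-tg\mid W)^{-1}$, then factor the determinant over the $\langle g\rangle$-module decomposition induced by the cycles. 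This buys automatic sign bookkeeping (the delicate parity checks of the paper, including the $d=\ell/2$ orbit and the $\sgn(g)$ twist for $s_\rho=-$, become eigenvalue computations) and handles both formulas uniformly; the paper's hands-on enumeration is more elementary and, importantly, is the template that extends to the later computation of $\xi^{+-+}_{\underline j,3}$ with the valence restriction, where the clean $\Lambda^e/\mathrm{Sym}^e$ formalism no longer applies. One small slip in your wording: in the exceptional $\Lambda^2$ case with $d=\ell/2$ the eigenvalues of $g$ on that $\ell/2$-dimensional span satisfy $\zeta^{\ell/2}=-1$ (equivalently, they are the $\ell$-th roots of unity that are not $(\ell/2)$-th roots of unity), not ``$\ell$-th roots of $-1$''; your factor $(1-t^{\ell})/(1-t^{\ell/2})=1+t^{\ell/2}$ is nevertheless the correct one, and all the collected factors match the lemma, including $\sgn(g_{\underline j})=\prod_\alpha(-1)^{j_{2\alpha}}$ accounting for the minus sign in the even-cycle factor of $\xi^{-+-}_{\underline j,0}$.
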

\begin{proof}
The total character $\xi_{\underline{j},0}^{+-+}$ is the polynomial in $t$ with the coefficient of $t^e$ being the character of $g_{\underline{j}}\in S_{\sum_\alpha \alpha j_\alpha}$ in the representation $\rho^+$ on $V_{\sum_\alpha \alpha j_\alpha,e,0}^{+-}$.
A basis of $V_{\sum_\alpha \alpha j_\alpha,e,0}^{+-}$ consists of graphs with $v$ distinguishable vertices and $e$ unoriented indistinguishable edges. Switching edges changes the sign, thus excluding double edges.

An element $g_{\underline{j}}\in S_{\sum_\alpha \alpha j_\alpha}$ acts on $V_{\sum_\alpha \alpha j_\alpha,e,0}^{+-}$ by moving one graph to another. To calculate the character, we need to find graphs $x$ moved to $kx$ for a scalar $k$. By the definition of the representation $k\in\{1,-1\}$. The element $g_{\underline{j}}$ acts in this way on the graph $x$ which has a symmetry (up to the sign) of $g_{\underline{j}}$ on vertices, i.~e.\ whose vertices are partitioned into the $j_\alpha$ cycles of $\alpha$ edges with circular symmetry in every cycle. The cycles are distinguishable, and the beginning vertex in the cycle is marked.

Let us pick a cycle with odd number $2\alpha-1$ of vertices, and let us number the vertices by $1,2,\dots,2\alpha-1$. If there is an edge between vertex $1$ and $l$, because of the symmetry there should be also an edge between vertex $2$ and $l+1$, and so on. We obtain $2\alpha-1$ edges in total. Graphs containing these edges contribute to the total character $\xi_{\underline{j},0}^{+-+}$ by multiplication with $t^-_{2\alpha-1}=-(-t)^{2\alpha-1}=t^{2\alpha-1}$. Note that cycling an odd number of edges is an even permutation, so it does not change sign.
Graphs not containing these edges contribute to the total character by multiplication with $1$, so this possibility contribute by $1+t^-_{2\alpha-1}$ (recall that there are no multiple edges). There are $\alpha$ possibilities of putting that cycle of edges, so the contribution is $(1+t^-_{2\alpha-1})^{\alpha}$. The contributions of all $j_\alpha$ cycles is $(1+t^-_{2\alpha-1})^{\alpha j_{2\alpha-1}}$, and the contribution of all odd cycles is $\prod_{\alpha\geq 1} (1+t^-_{2\alpha-1})^{\alpha j_{2\alpha-1}}$. This is the first line of the formula.

The second line is the contribution of even cycles. The third line is the contribution of the connections between two cycles of the same size, and the forth is the same for cycles of different sizes. The detailed derivation is easy and will be left to the reader. The similar calculation of the total character $\xi_{\underline{j},0}^{-+-}$ will also be left to the reader.
\end{proof}

We are indeed interested in $\xi_{\underline{j},3}^{+-+}$ and $\xi_{\underline{j},3}^{-+-}$. Let us calculate the first and leave the second to the reader. We proceed similarly to the proof of the previous lemma, but after fixing $g_{\underline{j}}\in S_{\sum_\alpha \alpha j_\alpha}$ we should consider only graphs with more than $2$ adjacent edges. Because of the symmetry, the valences of vertices in the same cycle are the same, so we can talk about the valence of the cycle. However, at this point we have not forced the vertices to be at least 3-valent.

What we can do is to construct a graph with some special cycles for which we are sure by the construction that they are at most 2-valent. Let $\overline{\xi}_{\underline{j},3}^{+-+}$ be the \emph{special total character}, i.e.\ `total character' of the partition $\underline{j}$ which allows adding 0, 1 or 2-valent cycles together with edges. So, $\overline{\xi}_{\underline{j},3}^{+-+}$ is the polynomial in variables $t$ and $s^+_\alpha$ for $\alpha\in\N$ where the coefficient next to $t^e\prod_\alpha \left(s^+_\alpha\right)^{n_\alpha}$ is the number of the graphs (counted with appropriate signs) with $j_\alpha$ distinguishable $\alpha$-cycles, $n_\alpha$ indistinguishable special $n_\alpha$-cycles and $e$ edges. All cycles have a marked ``first'' vertex.
If there is a symmetry of the order $r$ between indistinguishable cycles, we divide the term with $r$. 

The key fact is that special cycles with valence up to $2$ can be added to the fixed cycles of partition $\underline{j}$ in a controlled way: they are either disconnected from the rest and form free loops or lines (vacuum), connected to one cycle (antennas) or connect two cycles (connections). This is the reason why we can not calculate the dimension of $V_{v,e,i}^{s_\nu s_\mu s_\rho}$ for $i>3$. Careful calculation leads to the following formula.

\begin{equation}
\begin{split}
\overline{\xi}_{\underline{j},3}^{+-+} = & \xi_{\underline{j},0}^{+-+} \\
& \prod_{\beta>\alpha\geq 1}\prod_{c\geq 1}\prod_{l\geq 1}\exp\left[j_\alpha j_\beta \alpha\beta(c\LCM(\alpha,\beta))^{l-1}\left(s^+_{c\LCM(\alpha,\beta)}\right)^l \left(t^-_{c\LCM(\alpha,\beta)}\right)^{l+1}\right] \qquad
\begin{tikzpicture}[baseline=1.5ex, scale=.5]
\node[cy] (a) at (0,0.4) {};
\node[above left] at (a) {$\scriptstyle \alpha$};
\node[scy] (m1) at (1,1) {};
\node[scy] (m2) at (2,1) {};
\node (m3) at (3,1) {};
\node (ml2) at (4,1) {};
\node[scy] (ml1) at (5,1) {};
\node[cy] (b) at (6,0.4) {};
\node[above right] at (b) {$\scriptstyle \beta$};
\draw (a) edge[bend left=10] (m1);
\draw (m1) edge (m2);
\draw (m2) edge (m3);
\node at (3.5,1) {$\dots$};
\draw (ml2) edge (ml1);
\draw (ml1) edge[bend left=10] (b);
\draw [ultra thin] (1.3,0.7) -- (1.8,0.4);
\draw [ultra thin] (2.2,0.7) -- (2.4,0.4);
\draw [ultra thin] (4.7,0.7) -- (4.2,0.4);
\node at (3,0.1) {$\scriptstyle c\LCM(\alpha,\beta)$};
\draw [decorate,decoration={brace, amplitude=4pt}] (0.7,1.3) -- (5.3,1.3) node[midway, above=3pt]{$\scriptstyle l$};
\end{tikzpicture}\\
& \prod_{\alpha\geq 1}\prod_{c\geq 1}\prod_{l\geq 1}\exp\left[\frac{j_\alpha(j_\alpha-1)}{2}\alpha^2(c\alpha)^{l-1}\left(s^+_{c\alpha}\right)^l \left(t^-_{c\alpha}\right)^{l+1}\right] \qquad\qquad\qquad\qquad
\begin{tikzpicture}[baseline=1.5ex, scale=.5]
\node[cy] (a) at (0,0.4) {};
\node[above left] at (a) {$\scriptstyle \alpha$};
\node[scy] (m1) at (1,1) {};
\node[scy] (m2) at (2,1) {};
\node (m3) at (3,1) {};
\node (ml2) at (4,1) {};
\node[scy] (ml1) at (5,1) {};
\node[cy] (b) at (6,0.4) {};
\node[above right] at (b) {$\scriptstyle \alpha$};
\draw (a) edge[bend left=10] (m1);
\draw (m1) edge (m2);
\draw (m2) edge (m3);
\node at (3.5,1) {$\dots$};
\draw (ml2) edge (ml1);
\draw (ml1) edge[bend left=10] (b);
\node[below=1pt] at (m1) {$\scriptstyle c\alpha$};
\draw [decorate,decoration={brace, amplitude=4pt}] (0.7,1.3) -- (5.3,1.3) node[midway, above=3pt]{$\scriptstyle l$};
\end{tikzpicture}\\
& \prod_{\alpha\geq 1}\prod_{c\geq 1}\prod_{l\geq 2}\exp\left[\frac{1}{2}j_\alpha\alpha^2(c\alpha)^{l-1}\left(s^+_{c\alpha}\right)^l \left(t^-_{c\alpha}\right)^{l+1}\right] \qquad\qquad\qquad\qquad\qquad\qquad
\begin{tikzpicture}[baseline=2ex, scale=.5]
\node[cy] (a) at (3,0) {};
\node[above right] at (a) {$\scriptstyle \alpha$};
\node[scy] (m1) at (1,1) {};
\node[scy] (m2) at (2,1) {};
\node (m3) at (3,1) {};
\node (ml2) at (4,1) {};
\node[scy] (ml1) at (5,1) {};
\draw (a) edge[bend left=20] (m1);
\draw (m1) edge (m2);
\draw (m2) edge (m3);
\node at (3.5,1) {$\dots$};
\draw (ml2) edge (ml1);
\draw (ml1) edge[bend left=20] (a);
\node[below left] at (m1) {$\scriptstyle c\alpha$};
\draw [decorate,decoration={brace, amplitude=4pt}] (0.7,1.3) -- (5.3,1.3) node[midway, above=3pt]{$\scriptstyle l$};
\end{tikzpicture}\\
& \prod_{\alpha\geq 1}\prod_{c\geq 1}\exp\left[j_\alpha\frac{\alpha(\alpha-1)}{2}s^+_{c\alpha} \left(t^-_{c\alpha}\right)^2\right]
\qquad\qquad\qquad\qquad\qquad\qquad\qquad\qquad\qquad
\begin{tikzpicture}[baseline=.5ex, scale=.5]
\node[cy] (a) at (0,0) {};
\node[above left] at (a) {$\scriptstyle \alpha$};
\node[scy] (m1) at (1.5,.5) {};
\draw (a) edge[bend left=20] (m1);
\draw (a) edge[bend right=20] (m1);
\node[below right] at (m1) {$\scriptstyle c\alpha$};
\end{tikzpicture}\\
& \prod_{\alpha\geq 1}\prod_{c\geq 1}\exp\left[j_{2\alpha}\alpha\left(s^+_{(2c-1)\alpha}\right) \left(t^-_{2(2c-1)\alpha}\right)\right]
\qquad\qquad\qquad\qquad\qquad\qquad\qquad\qquad
\begin{tikzpicture}[baseline=.5ex, scale=.5]
\node[cy] (a) at (0,0) {};
\node[above left] at (a) {$\scriptstyle 2\alpha$};
\node[scy] (m1) at (1.5,.5) {};
\draw (a) edge[bend left=10,-left to] (m1);
\node[below right] at (m1) {$\scriptstyle (2c-1)\alpha$};
\end{tikzpicture}\\
& \prod_{\alpha\geq 1}\prod_{c\geq 1}\prod_{l\geq 1}\exp\left[j_\alpha \alpha(c\alpha)^{l-1}\left(s^+_{c\alpha}\right)^l \left(t^-_{c\alpha}\right)^l\right]
\qquad\qquad\qquad\qquad\qquad\qquad\qquad
\begin{tikzpicture}[baseline=2ex, scale=.5]
\node[cy] (a) at (0,.5) {};
\node[above left] at (a) {$\scriptstyle \alpha$};
\node[scy] (m1) at (1,.5) {};
\node[scy] (m2) at (2,.5) {};
\node (m3) at (3,.5) {};
\node (ml2) at (4,.5) {};
\node[scy] (ml1) at (5,.5) {};
\draw (a) edge (m1);
\draw (m1) edge (m2);
\draw (m2) edge (m3);
\node at (3.5,.5) {$\dots$};
\draw (ml2) edge (ml1);
\node[below right] at (ml1) {$\scriptstyle c\alpha$};
\draw [decorate,decoration={brace, amplitude=4pt}] (0.7,.8) -- (5.3,.8) node[midway, above=3pt]{$\scriptstyle l$};
\end{tikzpicture}\\
& \prod_{\alpha\geq 1}\prod_{c\geq 1}\prod_{l\geq 1}\exp\left[j_{2\alpha-1}(2\alpha-1)^l(2c)^{l-1}\left(s^+_{2c(2\alpha-1)}\right)^l\left(t^-_{2c(2\alpha-1)}\right)^l \left(t^-_{c(2\alpha-1)}+c(2\alpha-1)s^+_{c(2\alpha-1)}t^-_{2c(2\alpha-1)}\right)\right] \\
& \qquad\qquad\qquad\qquad\qquad\qquad\qquad\qquad\qquad\qquad\qquad\qquad\quad
\begin{tikzpicture}[baseline=0ex, scale=.5]
\node[cy] (a) at (0,.5) {};
\node[above left] at (a) {$\scriptstyle 2\alpha-1$};
\node[scy] (m1) at (1,1) {};
\node[scy] (m2) at (2,1) {};
\node (m3) at (3,1) {};
\node (ml2) at (4,1) {};
\node[scy] (ml1) at (5,1) {};
\node[scyx] at (ml1) {};
\draw (a) edge[bend left=10] (m1);
\draw (m1) edge (m2);
\draw (m2) edge (m3);
\node at (3.5,1) {$\dots$};
\draw (ml2) edge (ml1);
\node[below right] at (m2) {$\scriptstyle 2c(2\alpha-1)$};
\draw [decorate,decoration={brace, amplitude=4pt}] (0.7,1.3) -- (5.3,1.3) node[midway, above=3pt]{$\scriptstyle l$};
\node at (5,.5) {or};
\node[scy] (me) at (5,0) {};
\node[scy] (me2) at (6,0) {};
\draw (4.25,0) edge (me);
\draw (me) edge[-left to] (me2);
\node[above right] at (me2) {$\scriptstyle c(2\alpha-1)$};
\end{tikzpicture}\\
& \prod_{\alpha\geq 1}\prod_{c\geq 1}\prod_{l\geq 1}\exp\left[j_{2\alpha}(2\alpha)^l(c)^{l-1}\left(s^+_{2c\alpha}\right)^l\left(t^-_{2c\alpha}\right)^l \left(t^-_{c\alpha}+c\alpha s^+_{c\alpha}t^-_{2c\alpha}\right)\right]
\qquad
\begin{tikzpicture}[baseline=1.5ex, scale=.5]
\node[cy] (a) at (0,.5) {};
\node[above left] at (a) {$\scriptstyle 2\alpha$};
\node[scy] (m1) at (1,1) {};
\node[scy] (m2) at (2,1) {};
\node (m3) at (3,1) {};
\node (ml2) at (4,1) {};
\node[scy] (ml1) at (5,1) {};
\node[scyx] at (ml1) {};
\draw (a) edge[bend left=10] (m1);
\draw (m1) edge (m2);
\draw (m2) edge (m3);
\node at (3.5,1) {$\dots$};
\draw (ml2) edge (ml1);
\node[below right] at (m2) {$\scriptstyle 2c\alpha$};
\draw [decorate,decoration={brace, amplitude=4pt}] (0.7,1.3) -- (5.3,1.3) node[midway, above=3pt]{$\scriptstyle l$};
\node at (5,.5) {or};
\node[scy] (me) at (5,0) {};
\node[scy] (me2) at (6,0) {};
\draw (4.25,0) edge (me);
\draw (me) edge[-left to] (me2);
\node[above right] at (me2) {$\scriptstyle c\alpha$};
\end{tikzpicture}\\
& \prod_{c\geq 1}\prod_{l\geq 3}\exp\left[\frac{1}{2l}c^l\left(s^+_c\right)^l \left(t^-_c\right)^l\right]
\qquad\qquad\qquad\qquad\qquad\qquad\qquad\qquad\qquad\qquad
\begin{tikzpicture}[baseline=3ex, scale=.5]
\node[scy] (m1) at (1,1) {};
\node[scy] (m2) at (2,1) {};
\node (m3) at (3,1) {};
\node (ml2) at (4,1) {};
\node[scy] (ml1) at (5,1) {};
\draw (m1) edge[bend right=30] (ml1);
\draw (m1) edge (m2);
\draw (m2) edge (m3);
\node at (3.5,1) {$\dots$};
\draw (ml2) edge (ml1);
\node[below left] at (m1) {$\scriptstyle c$};
\draw [decorate,decoration={brace, amplitude=4pt}] (0.7,1.3) -- (5.3,1.3) node[midway, above=3pt]{$\scriptstyle l$};
\end{tikzpicture}\\
& \prod_{c\geq 1}\prod_{l\geq 2}\exp\left[\frac{1}{2}(2c-1)^{l-1}\left(s^+_{2c-1}\right)^l \left(t^-_{2c-1}\right)^{l-1}\right]
\qquad\qquad\qquad\qquad\qquad\qquad\quad
\begin{tikzpicture}[baseline=3ex, scale=.5]
\node[scy] (m1) at (1,1) {};
\node[scy] (m2) at (2,1) {};
\node (m3) at (3,1) {};
\node (ml2) at (4,1) {};
\node[scy] (ml1) at (5,1) {};
\draw (m1) edge (m2);
\draw (m2) edge (m3);
\node at (3.5,1) {$\dots$};
\draw (ml2) edge (ml1);
\node[below left] at (m1) {$\scriptstyle 2c-1$};
\draw [decorate,decoration={brace, amplitude=4pt}] (0.7,1.3) -- (5.3,1.3) node[midway, above=3pt]{$\scriptstyle l$};
\end{tikzpicture}\\
& \prod_{c\geq 1}\prod_{l\geq 0}\exp\left[\frac{1}{2}(2c)^{l-1}\left(s^+_{2c}\right)^l \left(t^-_{2c}\right)^{l+1}\left(2cs^+_{2c}+2cs^+_{2c}t^-_c+cs^+_c\right)^2\right]
\qquad\qquad\quad
\begin{tikzpicture}[baseline=0ex, scale=.5]
\node[scy] (a1) at (0,1) {};
\node at (0,.5) {or};
\node[scy] (a2) at (0,0) {};
\node[scyx] at (a2) {};
\node at (0,-0.5) {or};
\node[scy] (a3) at (0,-1) {};
\node[scy] (b1) at (6,1) {};
\node at (6,.5) {or};
\node[scy] (b2) at (6,0) {};
\node[scyx] at (b2) {};
\node at (6,-0.5) {or};
\node[scy] (b3) at (6,-1) {};
\node[scy] (m1) at (1,1) {};
\node[scy] (m2) at (2,1) {};
\node (m3) at (3,1) {};
\node (ml2) at (4,1) {};
\node[scy] (ml1) at (5,1) {};
\draw (m1) edge[-left to] (a1);
\draw (a2) edge (0.8,0);
\draw (a3) edge (0.8,-1);
\draw (ml1) edge[-left to] (b1);
\draw (b2) edge (5.2,0);
\draw (b3) edge (5.2,-1);
\draw (m1) edge (m2);
\draw (m2) edge (m3);
\node at (3.5,1) {$\dots$};
\draw (ml2) edge (ml1);
\node[below right] at (m2) {$\scriptstyle 2c$};
\node[above right] at (b1) {$\scriptstyle c$};
\node[above left] at (a1) {$\scriptstyle c$};
\draw [decorate,decoration={brace, amplitude=4pt}] (0.7,1.3) -- (5.3,1.3) node[midway, above=3pt]{$\scriptstyle l$};
\end{tikzpicture}\\
& \prod_{c\geq 1}\exp\left[\frac{1}{2}\frac{c(c-1)}{2}\left(s^+_c\right)^2 \left(t^-_c\right)^2-\frac{c}{4}(s^+_c)^2t^-_{2c}\right]
\qquad\qquad\;\;
\begin{tikzpicture}[baseline=0ex, scale=.5]
\node[scy] (a) at (0,0) {};
\node[above left] at (a) {$\scriptstyle c$};
\node[scy] (b) at (1.5,0) {};
\draw (a) edge[bend left=20] (b);
\draw (a) edge[bend right=20] (b);
\node at (7,0) {and cancelling \begin{tikzpicture}[baseline=-0.5ex]\node[scy] (a) at (0,0) {};\node[scy] (b) at (.5,0) {};\draw (a) edge[left to-left to] (b);\end{tikzpicture} from above};
\end{tikzpicture}\\
& \prod_{c\geq 1}\exp\left[cs^+_{2c} t^-_{2c}\right]
\qquad\qquad\qquad\qquad\qquad\qquad\qquad\qquad\qquad\qquad\qquad\qquad\qquad
\begin{tikzpicture}[baseline=0ex, scale=.5]
\node[scy] (a) at (0,0) {};
\node[scyt] at (a) {};
\node[above left] at (a) {$\scriptstyle 2c$};
\end{tikzpicture}\\
& \prod_{c\geq 1}\exp\left[cs^+_{2c-1} t^-_{2c-1}\right]
\qquad\qquad\qquad\qquad\qquad\qquad\qquad\qquad\qquad\qquad\qquad\qquad
\begin{tikzpicture}[baseline=0ex, scale=.5]
\node[scy] (a) at (0,0) {};
\node[scyt] at (a) {};
\node[above left] at (a) {$\scriptstyle 2c-1$};
\end{tikzpicture}\\
& \prod_{c\geq 1}\exp\left[s^+_{2c} t^-_c\right]
\qquad\qquad\qquad\qquad\qquad\qquad\qquad\qquad\qquad\qquad\qquad\qquad\qquad\quad
\begin{tikzpicture}[baseline=0ex, scale=.5]
\node[scy] (a) at (0,0) {};
\node[scyx] at (a) {};
\node[above left] at (a) {$\scriptstyle 2c$};
\end{tikzpicture}\\
& \prod_{c\geq 1}\exp\left[s^+_c\right]
\qquad\qquad\qquad\qquad\qquad\qquad\qquad\qquad\qquad\qquad\qquad\qquad\qquad\qquad\;
\begin{tikzpicture}[baseline=0ex, scale=.5]
\node[scy] (a) at (0,0) {};
\node[above left] at (a) {$\scriptstyle c$};
\end{tikzpicture}
\end{split}
\end{equation}

The diagrams next to the factors depict the shape from which the factor comes. Full nodes \begin{tikzpicture} \node[cy] at (0,0) {};\end{tikzpicture} represent general cycles in the graph, and empty nodes \begin{tikzpicture} \node[scy] at (0,0) {};\end{tikzpicture} represent special cycles added to the graph, which must be at most 2-valent. Small number next to the node \begin{tikzpicture} \node[cy] (a) at (0,0) {};\node[above left] at (a) {$\scriptstyle \alpha$};\end{tikzpicture} represent the order (number of vertices) of the cycle. A symbol \begin{tikzpicture} \node[scy] (a) at (0,0) {};\node[scyx] at (a) {};\end{tikzpicture} represents a special even cycle with opposite vertices connected, and  \begin{tikzpicture} \node[scy] (a) at (0,0) {};\node[scyt] at (a) {};\end{tikzpicture} represents a special 2-valent cycle with a set of inside edges (not towards the opposite vertex) respecting the symmetry.
A connection \begin{tikzpicture}\node[scy] (a) at (0,0) {};\node[scy] (b) at (.5,0) {};\draw (a) edge[bend left=10,-left to] (b);\end{tikzpicture} (with the right-hand cycle always being the special one) represents a set of edges where every vertex of the right-hand cycle is by symmetry forced to be connected to two opposite vertices of the left-hand even cycle. A connection \begin{tikzpicture}\node[scy] (a) at (0,0) {};\node[scy] (b) at (.5,0) {};\draw (a) edge[bend left=10] (b);\end{tikzpicture} represents a single set of edges connecting vertices from different cycles respecting the symmetry, different from the one represented by a harpoon.

Note that cycles \begin{tikzpicture} \node[scy] (a) at (0,0) {};\node[scyx] at (a) {};\end{tikzpicture} have inner valence $1$, and cycles \begin{tikzpicture} \node[scy] (a) at (0,0) {};\node[scyt] at (a) {};\end{tikzpicture} have inner valence $2$. The existence of a simple connection towards a special cycle \begin{tikzpicture}\node[cy] (a) at (0,0) {};\node[scy] (b) at (.5,0) {};\node[above left] at (a) {$\scriptstyle \alpha$};\node[above right] at (b) {$\scriptstyle \beta$};\draw (a) edge[bend left=10] (b);\end{tikzpicture} implies that $\alpha|\beta$ and increases the valence of the special cycle by one. Therefore a simple connection between two special cycles implies that they have the same order.
A ``harpooned'' connection towards a special cycle \begin{tikzpicture}\node[cy] (a) at (0,0) {};\node[scy] (b) at (.5,0) {};\node[above left] at (a) {$\scriptstyle 2\alpha$};\node[above right] at (b) {$\scriptstyle \beta$};\draw (a) edge[bend left=10,-left to] (b);\end{tikzpicture} implies that $\alpha|\beta$ and increases the valence of the special cycle by two. If the other cycle is also special, its valence is increased by one, and it has a double order.

For the illustration we explain the first factor, the contribution of connections between different cycles, say an $\alpha$-cycle and a $\beta$-cycle, $\beta<\alpha$. The two cycles can be connected via a chain of $l$ special cycles. Because all special cycles are connected to two cycles, there can not be internal connections in the cycles and from each vertex exactly 1 edge goes to the next and to the previous cycle. Because of connecting rules, the order of all special cycles in the chain is the same and it is a multiple of the least common multiple $\LCM(\alpha,\beta)$. So, the contribution of all connections between different cycles is the product over all $\beta>\alpha\geq 1$, all chain lengths $l\geq 1$ and all possibilities of orders of special cycles $c\LCM(\alpha,\beta)$ for $c\geq 1$, of the contribution of such type of connections, i.e.\ of connections between $\alpha$ and $\beta$-cycle of length $l$ and order of special cycle $c\LCM(\alpha,\beta)$.

Let $\xi$ be the contribution of exactly 1 such connection. There can be any number of that type of connections for generally different starting and ending $\alpha$ and $\beta$-cycles. If there are $n$ of them connecting different pairs of cycles, the contribution is $\frac{1}{n!}\xi^n$ in order not to count same cases multiple times. Even if some of them connect the same pair of cycles, because of the symmetry factor the contribution remains $\frac{1}{n!}\xi^n$. So, the total contribution of that type of connection is $1+\xi+\frac{1}{2}\xi^2+\frac{1}{3!}\xi^3+\dots=\exp(\xi)$.

To calculate $\xi$ we first chose an $\alpha$-cycle and $\beta$-cycle in $j_\alpha j_\beta$ possible ways. We can connect the first special cycle with the $\alpha$-cycle in $\alpha$ different ways, and the last one with the $\beta$-cycle in $\beta$ different ways. Connections between special cycles can be done in $(c\LCM(\alpha,\beta))^{l-1}$ different ways. We also add $l$ special cycles $\left(s^+_{c\LCM(\alpha,\beta)}\right)^l$ and $l+1$ cycles of edges $\left(t^-_{c\LCM(\alpha,\beta)}\right)^{l+1}$. Multiplying everything leads to the $\xi$ of the first factor. Other factors are similar.

To get the total character $\xi_{\underline{j},3}^{+-+}$ we start with the total character $\xi_{\underline{j},0}^{+-+}$ of all-valence cycles, and subtract the character of graphs with the same cycles, of which one is special (2- or less-valent). We subtracted graphs with two low-valent cycles twice, so we need to add the character of graphs with 2 special cycles. Than we need to subtract the character of graphs with 3 special cycles, add with 4, etc.

So all special total characters $\overline{\xi}_{\underline{k},3}^{+-+}$ for $\underline{k}\leq\underline{j}$ contribute to the total character $\xi_{\underline{j},3}^{+-+}$, namely the coefficient (a polynomial in $t$) next to $\prod_\alpha \left(s^+_\alpha\right)^{j_\alpha-k_\alpha}$ with a sign $(-1)^{\sum_\alpha j_\alpha-k_\alpha}$. But all cycles in $\xi_{\underline{j},3}^{+-+}$ are distinguishable while the special cycles contributing to $\overline{\xi}_{\underline{k},3}^{+-+}$ are not. We can put $j_\alpha-k_\alpha$ special indistinguishable cycles between $k_\alpha$ ordered cycles in $j_\alpha!/k_\alpha!$ ways. So the contribution of the coefficient next to $\prod_\alpha \left(s^+_\alpha\right)^{j_\alpha-k_\alpha}$ in $\overline{\xi}_{\underline{k},3}^{+-+}$ into $\xi_{\underline{j},3}^{+-+}$ is multiplied by $(-1)^{\sum_\alpha j_\alpha-k_\alpha}k_\alpha!/j_\alpha!$.

Therefore, if we put $s^+_\alpha:=-s^\alpha/\alpha$ we arrive at the formula (see \eqref{equ:8}):
\begin{equation}
P^{even} := P_{3}^{+-+} =
 \sum_{j_1,j_2,\dots \geq 0} \prod_\alpha\frac{s^{\alpha j_\alpha}}{ j_\alpha ! \alpha^{j_\alpha}}
 \xi_{\underline{j},3}^{+-+} =
 \sum_{j_1,j_2,\dots \geq 0} \prod_\alpha\frac{s^{\alpha j_\alpha}}{ j_\alpha ! \alpha^{j_\alpha}}
 \overline{\xi}_{\underline{j},3}^{+-+}.
\end{equation}
We use the $q$-Pochhammer symbol
\begin{equation}
 \QP{a,q} = \prod_{k\geq 0}\left(1-a q^k\right)
\end{equation}
and simplify
\begin{multline}
P^{even}(s,t) = 
\frac{\QP{s, (st)^2}}{\QP{-st,(st)^2}}
\sum_{j_1,j_2,\dots \geq 0}
\prod_\alpha \frac{s^{\alpha j_\alpha}}{j_\alpha! \alpha^{j_\alpha}  }
\frac{1}{\QP{(-st)^\alpha, (-st)^\alpha}^{j_\alpha}}\\
\left( \frac{\QP{(-t)^{2\alpha-1}, (st)^{4\alpha-2}}}{\QP{s^{2\alpha-1}t^{4\alpha-2},(st)^{4\alpha-2}}} \right)^{j_{2\alpha-1}/2}
\left( \frac{\QP{(-t)^{\alpha}, (st)^{2\alpha}}}{ \QP{s^{\alpha}t^{2\alpha},(st)^{2\alpha}}} \right)^{j_{2\alpha}}
\prod_{\alpha, \beta} \QP{(-t)^{\LCM(\alpha, \beta)}, (-st)^{\LCM(\alpha, \beta)}}^{\GCD(\alpha, \beta)j_\alpha j_\beta/2}.
\end{multline}
By an equally tedious and lengthy computation which we leave to the reader one arrives at the formula for the odd case:
\begin{multline}
P^{odd}(s,t) := P_{3}^{-+-} =
\frac{1}{\QP{-s, (st)^2} \QP{(st)^2,(st)^2}}
\sum_{j_1,j_2,\dots \geq 0}
\prod_\alpha \frac{(-s)^{\alpha j_\alpha}}{j_\alpha! (-\alpha)^{j_\alpha}  }
\frac{1}{\QP{(-st)^\alpha, (-st)^\alpha}^{j_\alpha}}\\
\left( \frac{\QP{t^{2\alpha-1}, (st)^{4\alpha-2}}}{\QP{(-s)^{2\alpha-1}t^{4\alpha-2},(st)^{4\alpha-2}}} \right)^{j_{2\alpha-1}/2}
\left( \frac{\QP{t^{\alpha}, (st)^{2\alpha}}}{ \QP{(-s)^{\alpha}t^{2\alpha},(st)^{2\alpha}}} \right)^{j_{2\alpha}}
\prod_{\alpha, \beta} \frac{1}{\QP{t^{\LCM(\alpha, \beta)}, (-st)^{\LCM(\alpha, \beta)}}^{\GCD(\alpha, \beta)j_\alpha j_\beta/2}}.
\end{multline}

\subsection{A variant of Theorem \ref{thm:dimgenfun} }

By a similar computation as that leading to Theorem \ref{thm:dimgenfun} we may also compute generating functions for the dimensions of the spaces of graphs $\VV_{v,e}^{odd*}$ and $\VV_{v,e}^{even*}$.

\begin{thm}\label{thm:dimgenfun2}
Define
\begin{align*}
 P^{odd*}(s,t) &:= \sum_{v,e} \dim\left(\VV_{v,e}^{odd*}\right) s^v t^e &
 P^{even*}(s,t) &:= \sum_{v,e} \dim\left(\VV_{v,e}^{even*}\right) s^v t^e \, .
\end{align*}
Then
\begin{align*}
 P^{odd*}(s,t) =\; & 
\frac{1}{\QP{-s, (st)^2} \QP{(st)^4,(st)^2}}
\sum_{j_1,j_2,\dots \geq 0}
\prod_\alpha \frac{(-s)^{\alpha j_\alpha}}{j_\alpha! (-\alpha)^{j_\alpha}  }
\left(\frac{1-\left(-st^2\right)^\alpha}{\QP{(-st)^\alpha, (-st)^\alpha}}\right)^{j_\alpha}\\
&
\left(\frac{\QP{t^{2\alpha-1}, (st)^{4\alpha-2}}}{\left(1-t^{4\alpha-2}\right)\QP{(-s)^{2\alpha-1}t^{4\alpha-2},(st)^{4\alpha-2}}} \right)^{j_{2\alpha-1}/2}
\left( \frac{\QP{t^{\alpha}, (st)^{2\alpha}}}{\left(1+t^{2\alpha}\right)\QP{(-s)^{\alpha}t^{2\alpha},(st)^{2\alpha}}} \right)^{j_{2\alpha}}\\
&
\prod_{\alpha, \beta} \left(\frac{1-t^{2\LCM(\alpha,\beta)}}{\QP{t^{\LCM(\alpha, \beta)}, (-st)^{\LCM(\alpha, \beta)}}}\right)^{\GCD(\alpha, \beta)j_\alpha j_\beta/2},
\\
 P^{even*}(s,t) =\; & 
\frac{\QP{s, (st)^2}}{\QP{-(st)^3,(st)^2}}
\sum_{j_1,j_2,\dots \geq 0}
\prod_\alpha \frac{s^{\alpha j_\alpha}}{j_\alpha! \alpha^{j_\alpha}  }
\frac{1}{\QP{(-st)^\alpha, (-st)^\alpha}^{j_\alpha}}
\left(\frac{\QP{s^{4\alpha-2}(-t)^{6\alpha-3}, (st)^{4\alpha-2}}}{\left(1+t^{2\alpha-1}\right)\QP{s^{2\alpha-1}t^{4\alpha-2},(st)^{4\alpha-2}}} \right)^{j_{2\alpha-1}/2}\\
&
\left( \frac{\QP{s^{2\alpha}(-t)^{3\alpha}, (st)^{2\alpha}}}{\left(1+(-t)^\alpha\right)\QP{s^{\alpha}t^{2\alpha},(st)^{2\alpha}}} \right)^{j_{2\alpha}}
\prod_{\alpha, \beta} \QP{(-t)^{\LCM(\alpha, \beta)}, (-st)^{\LCM(\alpha, \beta)}}^{\GCD(\alpha, \beta)j_\alpha j_\beta/2}.
\end{align*}
\end{thm}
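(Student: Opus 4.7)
The plan is to mirror, step by step, the character-theoretic computation used to prove Theorem \ref{thm:dimgenfun}, but with the list of admissible edge configurations reduced so as to exclude either multiple edges (odd case) or tadpoles (even case). As in Section \ref{ProofTm1}, we first express
\[
 \dim\bigl(V_{v,e,3}^{-*-}\bigr) \;=\; \frac{1}{v!}\sum_{g\in S_v}\xi_{v,e,3}^{-*-}(g), \qquad
 \dim\bigl(V_{v,e,3}^{*-+}\bigr) \;=\; \frac{1}{v!}\sum_{g\in S_v}\xi_{v,e,3}^{*-+}(g),
\]
reduce to conjugacy classes $\underline{j}=(j_1,j_2,\dots)$, and thereby reduce the problem to computing the total characters $\xi_{\underline{j},3}^{-*-}$ and $\xi_{\underline{j},3}^{*-+}$.

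First I would recompute the base total characters $\xi_{\underline{j},0}^{-*}$ and $\xi_{\underline{j},0}^{*-}$. Starting from the products of $(1+t_\alpha^-)$-factors and $(1-t_\alpha^+)^{-1}$-factors that appeared in the non-starred case, one removes the contribution of each orbit of pairs of vertices that would receive a second edge (odd/no-mul case), respectively each orbit of vertices that would carry a tadpole (even/no-tad case). Concretely, in the no-multiple-edge computation a geometric sum $\sum_{k\ge 0}\bigl(t^+_\alpha\bigr)^k$ for the number of copies of an edge-orbit is replaced by the binomial $1+t^+_\alpha$, producing the new factors $\bigl(1-(-st^2)^\alpha\bigr)$ visible in the statement; symmetrically, the even case loses the tadpole-orbit factors $(1+t_\alpha^-)$ attached to each cycle, giving rise to the $(1+(-t)^\alpha)^{-1}$ and $(1+t^{2\alpha-1})^{-1}$ corrections.

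Next I would redo the ``special cycle'' analysis that enforces the at-least-trivalence condition, i.e.\ the explicit (fifteen-line) formula for $\overline{\xi}_{\underline{j},3}^{+-+}$ in Section \ref{ProofTm1}. Each diagrammatic factor there must be re-examined. The factors whose underlying diagram realises a multiple edge (the ``double-edge between a cycle and a special $1$-valent cycle'', the harpooned connections, and the double edge $\bullet\mathrel{\rightrightarrows}\circ$ between two special cycles) disappear in the no-multiple-edge computation, while the factors whose diagram realises a tadpole (the special $2$-valent cycle marked with a triangle, and the corresponding isolated tadpole contribution) disappear in the no-tadpole computation. All other factors are unchanged. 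Inclusion--exclusion in the number of special cycles then goes through verbatim, because the argument was purely formal in the choice of admissible diagrams.

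Finally, I would perform the substitution $s^+_\alpha = -s^\alpha/\alpha$ (or its odd analogue) and reassemble the result into $q$-Pochhammer symbols, exactly as in the concluding computation of Theorem \ref{thm:dimgenfun}. The main obstacle is the bookkeeping: one must verify that after removing precisely the diagrams that create a forbidden configuration, the remaining infinite product still collapses into the compact $q$-Pochhammer form announced in the theorem. I expect no new idea beyond those used for Theorem \ref{thm:dimgenfun}; the verification is strictly analogous but somewhat more delicate in its cycle-by-cycle accounting, and yields the claimed expressions for $P^{odd*}(s,t)$ and $P^{even*}(s,t)$.
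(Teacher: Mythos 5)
Your overall strategy is exactly the one the paper itself (only) sketches: the paper proves Theorem \ref{thm:dimgenfun2} by saying ``by a similar computation as that leading to Theorem \ref{thm:dimgenfun}'', i.e.\ redo the character computation of Section \ref{ProofTm1} with the admissible edge configurations restricted. Your reduction to total characters, the inclusion--exclusion over special cycles, and the final substitution $s^+_\alpha=-s^\alpha/\alpha$ with reassembly into $q$-Pochhammer symbols are all the right skeleton, and your mechanism for the base characters is correct: forbidding multiple edges means each edge-orbit may occur with multiplicity $0$ or $1$, so a per-orbit geometric factor $1/(1-x)$ becomes $1+x$ (equivalently, multiply by $1-x^2$; this is visibly where the factors $\left(1-t^{2\LCM(\alpha,\beta)}\right)^{\GCD(\alpha,\beta)j_\alpha j_\beta/2}$ come from), while forbidding tadpoles deletes the tadpole orbit $(1+t^-_\alpha)$ attached to each fixed cycle (visibly where the $(1+t^{2\alpha-1})^{-1}$ and $(1+(-t)^\alpha)^{-1}$ corrections come from).

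However, your prescription for the special-cycle (trivalence-enforcing) step is wrong in a way that would produce an incorrect generating function if executed literally. You propose to delete entire diagram families, but most of the families you name never create the forbidden configuration. A harpooned connection sends each vertex of the special cycle to two \emph{opposite} (hence distinct) vertices of an even cycle, so it contains no parallel edges and must be kept in the $odd*$ computation; likewise the ``double-edge'' diagrams between a cycle and a special cycle, and between two special cycles, carry coefficients of the form $\binom{\alpha}{2}$ precisely because the two edge-orbits have distinct offsets, so they too contain no multiple edges. Parallel edges arise only when one and the same edge-orbit is taken with multiplicity $\geq 2$, so the correct modification is the same per-orbit replacement $1/(1-x)\rightsquigarrow 1+x$ you used for the base character (applied to the odd-case analogue of the displayed $\overline{\xi}$ formula, which the paper does not even write down), not wholesale deletion. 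Symmetrically, in the $even*$ case the triangle-marked special cycles do not disappear: their coefficient $c$ counts the $c$ symmetric sets of inner chords \emph{including} the tadpole offset, and the no-tadpole condition merely reduces this count (and the analogous offset counts $\alpha$, $\alpha-1$ inside the fixed cycles) by one, rather than removing the factor. Until the diagram-by-diagram bookkeeping is redone at this finer level -- which orbits within each family are repeated or are tadpole orbits -- the plan does not yet yield the stated formulas, e.g.\ it cannot account for the extra factors $\left(1-(-st^2)^\alpha\right)^{j_\alpha}$ or the shift from $\QP{-st,(st)^2}$ to $\QP{-(st)^3,(st)^2}$ in the prefactor.
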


\subsection{The connected part}
\label{ConnectedP}

Let us denote $n_b^v:=\dim(V_{v,b+v}^{odd})$ and $\tilde n_b^v:=\dim(\tilde V_{v,b+v}^{odd})$. Basis elements of $V_{v,b+v}^{odd}$ are possibly disconnected graphs. Let one of them consist of $j_\alpha^\beta$ connected graphs in $\tilde V_{\beta,\alpha+\beta}^{odd}$, for $\alpha=1,\dots, c$ and $\beta=1,\dots, v$. It is $\sum_{\alpha,\beta}\alpha j_\alpha^\beta=b$ and $\sum_{\alpha,\beta}\beta j_\alpha^\beta=v$. So we are choosing $j_\alpha^\beta$ elements out of $\tilde n_\alpha^\beta$ basis elements of $\tilde V_{\beta,\alpha+\beta}^{odd}$. This can be done in $\binom{\tilde n_\alpha^\beta}{j_\alpha^\beta}$ ways if the number of vertices $\beta$ is odd, and in $(-1)^{j_\alpha^\beta}\binom{-\tilde n_\alpha^\beta}{j_\alpha^\beta}$ ways if $\beta$ is even, respecting the symmetry. All together, we have the formula
\begin{equation}
 n_b^v=\sum_{\substack{\left(j_\alpha^\beta\geq 0|\alpha,\beta=1,2,\dots\right) \\ \sum_{\alpha,\beta}\alpha j_\alpha^\beta=b \\ \sum_{\alpha,\beta}\beta j_\alpha^\beta=v}}
 \prod_{\alpha,\beta}\left(-(-1)^\beta\right)^{j_\alpha^\beta}\binom{-(-1)^\beta\tilde n_\alpha^\beta}{j_\alpha^\beta}.
\end{equation}

One then computes
\begin{equation}
\begin{split}
\chi_b^{odd} & =\sum_{v\geq 0}(-1)^v n_b^v= \\
& = \sum_{v\geq 0}(-1)^v \sum_{\substack{\left(j_\alpha^\beta\geq 0|\alpha,\beta=1,2,\dots\right) \\ \sum_{\alpha,\beta}\alpha j_\alpha^\beta=b \\ \sum_{\alpha,\beta}\beta j_\alpha^\beta=v}}
 \prod_{\alpha,\beta}\left(-(-1)^\beta\right)^{j_\alpha^\beta}\binom{-(-1)^\beta\tilde n_\alpha^\beta}{j_\alpha^\beta}= \\
& = \sum_{\substack{\left(j_\alpha^\beta\geq 0|\alpha,\beta=1,2,\dots\right) \\ \sum_{\alpha,\beta}\alpha j_\alpha^\beta=b}}
 (-1)^{\sum_{\alpha,\beta}\beta j_\alpha^\beta}\prod_{\alpha,\beta}\left(-(-1)^\beta\right)^{j_\alpha^\beta}\binom{-(-1)^\beta\tilde n_\alpha^\beta}{j_\alpha^\beta}= \\
& = \sum_{\substack{\left(j_\alpha^\beta\geq 0|\alpha,\beta=1,2,\dots\right) \\ \sum_{\alpha,\beta}\alpha j_\alpha^\beta=b}}
 \prod_{\alpha,\beta}(-1)^{j_\alpha^\beta}\binom{-(-1)^\beta\tilde n_\alpha^\beta}{j_\alpha^\beta}= \\
& = \sum_{\substack{\left(i_\alpha\geq 0|\alpha=1,2,\dots\right) \\ \sum_{\alpha}\alpha i_\alpha=b}}
 \sum_{\substack{\left(j_\alpha^\beta\geq 0|\alpha,\beta=1,2,\dots\right) \\ \sum_{\beta}j_\alpha^\beta=i_\alpha}}
 \prod_\alpha(-1)^{i_\alpha}\prod_\beta\binom{-(-1)^\beta\tilde n_\alpha^\beta}{j_\alpha^\beta}= \\
 & = \sum_{\underline{i}\in\mathcal{P}(b)}\prod_\alpha(-1)^{i_\alpha}
 \sum_{\substack{\left(j^\beta\geq 0|\beta=1,2,\dots\right) \\ \sum_{\beta}j^\beta=i_\alpha}}
 \prod_\beta\binom{-(-1)^\beta\tilde n_\alpha^\beta}{j_\alpha^\beta}.
\end{split}
\end{equation}

\begin{lemma}
For every $i\in\N$ it holds that
\begin{equation}
 \sum_{\substack{\left(j^\beta\geq 0|\beta=1,2,\dots\right) \\ \sum_\beta j^\beta=i}}\prod_\beta\binom{-(-1)^\beta\tilde n_\alpha^\beta}{j^\beta}=\binom{-\tilde\chi_\alpha^{odd}}{i}.
\end{equation}
\end{lemma}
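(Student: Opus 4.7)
My plan is to recognize this identity as a (possibly generalized) Vandermonde--Chu convolution. Setting $x_\beta := -(-1)^\beta \tilde n_\alpha^\beta$, the right-hand side is $\binom{\sum_\beta x_\beta}{i}$, since $\sum_\beta x_\beta = -\sum_\beta (-1)^\beta \tilde n_\alpha^\beta = -\tilde\chi_\alpha^{odd}$. The left-hand side is exactly the coefficient of $t^i$ in $\prod_\beta (1+t)^{x_\beta}$, while the right-hand side is the coefficient of $t^i$ in $(1+t)^{\sum_\beta x_\beta}$. So the lemma follows from the fact that exponentials of logarithms multiply, i.e.\ the functional equation of $(1+t)^x$.

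Concretely, I would execute the proof in three short steps. First, invoke the formal power series identity
\begin{equation*}
(1+t)^y = \sum_{n \geq 0} \binom{y}{n} t^n
\end{equation*}
which is valid for any $y$ in a $\mathbb{Q}$-algebra and in particular for all the integers $x_\beta$ above. Second, observe that because graphs are required to be at least trivalent, one has $3\beta \leq 2(\alpha+\beta)$, hence $\tilde n_\alpha^\beta = 0$ for $\beta > 2\alpha$; therefore the product $\prod_\beta (1+t)^{x_\beta}$ is actually a \emph{finite} product and both the sum defining $\tilde\chi_\alpha^{odd}$ and the sum over tuples $(j^\beta)$ on the left-hand side have only finitely many nonzero terms. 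Third, expand
\begin{equation*}
\prod_\beta (1+t)^{x_\beta} = (1+t)^{\sum_\beta x_\beta}
\end{equation*}
and compare coefficients of $t^i$; the left-hand side gives the sum appearing in the lemma and the right-hand side gives $\binom{-\tilde\chi_\alpha^{odd}}{i}$.

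There is essentially no obstacle beyond the convergence/finiteness issue addressed in step two. If one wanted to avoid the at-least-trivalent reduction altogether, the identity still holds in a purely formal sense: both sides are polynomial in the finitely many variables $\tilde n_\alpha^\beta$ that can appear (since $\sum_\beta j^\beta = i$ forces all but $i$ of the $j^\beta$ to be zero), so the identity reduces to the ordinary finite Vandermonde--Chu convolution applied one pair at a time.
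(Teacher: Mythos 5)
Your proof is correct and follows essentially the same route as the paper: the paper's argument is precisely the generating-function computation $\sum_i x^i(\text{LHS}) = \prod_\beta(1+x)^{-(-1)^\beta\tilde n_\alpha^\beta} = (1+x)^{-\tilde\chi_\alpha^{odd}}$, i.e.\ the Vandermonde--Chu convolution read off from the multiplicativity of $(1+x)^y$. Your explicit remark that trivalence forces $\tilde n_\alpha^\beta=0$ for $\beta>2\alpha$, so that the product and the sum defining $\tilde\chi_\alpha^{odd}$ are finite, is a small point the paper leaves implicit, but otherwise the two arguments coincide.
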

\begin{proof}
\begin{equation*}
 \sum_ix^i\sum_{\substack{\left(j^\beta\geq 0|\beta=1,2,\dots\right) \\ \sum_\beta j^\beta=i}}\prod_\beta\binom{-(-1)^\beta\tilde n_\alpha^\beta}{j^\beta}=
 \sum_i\sum_{\substack{\left(j^\beta\geq 0|\beta=1,2,\dots\right) \\ \sum_\beta j^\beta=i}}\prod_\beta\binom{-(-1)^\beta\tilde n_\alpha^\beta}{j^\beta}x^{j^\beta}=
 \sum_{\left(j^\beta\geq 0|\beta=1,2,\dots\right)}\prod_\beta\binom{-(-1)^\beta\tilde n_\alpha^\beta}{j^\beta}x^{j^\beta}=
\end{equation*}
\begin{equation*}
 =\prod_\beta\sum_j\binom{-(-1)^\beta\tilde n_\alpha^\beta}{j}x^j=
 \prod_\beta(1+x)^{-(-1)^\beta\tilde n_\alpha^\beta}=
 (1+x)^{-\sum_\beta(-1)^\beta\tilde n_\alpha^\beta}=
 (1+x)^{-\tilde\chi_\alpha^{odd}}=
 \sum_i\binom{-\tilde\chi_\alpha^{odd}}{i}x^i
\end{equation*}
\end{proof}

So the conclusion is that
\begin{equation}
\chi_b^{odd} = \sum_{\underline{i}\in\mathcal{P}(b)}\prod_\alpha(-1)^{i_\alpha}\binom{-\tilde\chi_\alpha^{odd}}{i_\alpha}.
\end{equation}
A similar argument leads to the same formula for the even case:
\begin{equation}
\chi_b^{even} = \sum_{\underline{i}\in\mathcal{P}(b)}\prod_\alpha(-1)^{i_\alpha}\binom{-\tilde\chi_\alpha^{even}}{i_\alpha}.
\end{equation}
The same formulas hold for $\chi_b^{odd*}$ and $\chi_b^{even*}$. They can be used recursively to calculate the Euler characteristics of the complexes of connected graphs from that of the complexes of all graphs.

\subsection{Numerical data}
\label{data}
The formulas from Subsection \ref{ProofTm1} can be used to calculate the dimensions of the spaces $V_{v,e,i}^{s_\nu s_\mu s_\rho}$ and $\tilde V_{v,e,i}^{s_\nu s_\mu s_\rho}$ using the computer. As an example, in Table \ref{tbl:dimensions} we list the dimensions of $\tilde \VV_{v,e}^{even*}$ and $\tilde \VV_{v,e}^{odd*}$ for $v$ up to $24$ and $e$ up to $36$, modulo the product of prime numbers $3999971\cdot 3999949\cdot 3999929\cdot 3999923\cdot 3999917\cdot 3999901\cdot 3999893\cdot 3999971$.
Our results can also be used to calculate the Euler characteristics of the graph complexes $\chi_b^{even}$, $\chi_b^{odd}$, $\chi_b^{even*}$ and $\chi_b^{odd*}$. The formulas from Subsection \ref{ConnectedP} lead us to the Euler characteristics of the connected parts.
We have done these calculations for the even and odd case, with and without tadpoles and multiple edges respectively, for the whole complex and for the connected part, with $b$ up to $30$, modulo $15808115832821291933=991\cdot 997\cdot 3999949\cdot 3999971$. The results are listed in the following table.
Note that the omission of tadpoles or multiple edges does not alter the Euler characteristic. This of course follows for all $b$ from Theorem \ref{thm:intromultiple} and \cite[Proposition 3.4]{grt} and is expected, but we nevertheless provide the computed data below as a consistency check. 

\begin{rem}
 Note in particular that the Euler characteristics of the even and odd graph complexes are astonishingly similar, up to a conventional sign factor.
\end{rem}

\begin{table}[h]
\begin{tabular}{| c | c | c | c | c | c | c | c | c | c |}
\hline
 & \multicolumn{4}{ c| }{Even} & \multicolumn{4}{ c| }{Odd} \\
\cline{2-9}
$b$ & \multicolumn{2}{ c| }{All} & \multicolumn{2}{ c| }{Connected} & \multicolumn{2}{ c| }{All} & \multicolumn{2}{ c| }{Connected}\\
\cline{2-9}
 & $\chi_b^{even}$ & $\chi_b^{even*}$ & $\tilde\chi_b^{even}$ & $\tilde\chi_b^{even*}$ & $\chi_b^{odd}$ & $\chi_b^{odd*}$ & $\tilde\chi_b^{odd}$ & $\tilde\chi_b^{odd*}$ \\
\hline
1 & 0 & 0 & 0 & 0 & 1 & 0 & 1 & 0 \\
\hline
2 & 1 & 1 & 1 & 1 & 2 & 1 & 1 & 1 \\
\hline
3 & 0 & 0 & 0 & 0 & 3 & 1 & 1 & 1 \\
\hline
4 & 2 & 2 & 1 & 1 & 6 & 3 & 2 & 2 \\
\hline
5 & -1 & -1 & -1 & -1 & 8 & 2 & 1 & 1 \\
\hline
6 & 3 & 3 & 1 & 1 & 14 & 6 & 2 & 2 \\
\hline
7 & -1 & -1 & 0 & 0 & 20 & 6 & 2 & 2 \\
\hline
8 & 4 & 4 & 0 & 0 & 32 & 12 & 2 & 2 \\
\hline
9 & -4 & -4 & -2 & -2 & 44 & 12 & 1 & 1 \\
\hline
10 & 6 & 6 & 1 & 1 & 68 & 24 & 3 & 3 \\
\hline
11 & -5 & -5 & 0 & 0 & 93 & 25 & 1 & 1 \\
\hline
12 & 8 & 8 & 0 & 0 & 139 & 46 & 3 & 3 \\
\hline
13 & -10 & -10 & -2 & -2 & 191 & 52 & 4 & 4 \\
\hline
14 & 12 & 12 & 0 & 0 & 274 & 83 & 2 & 2 \\
\hline
15 & -18 & -18 & -4 & -4 & 372 & 98 & 2 & 2 \\
\hline
16 & 12 & 12 & -3 & -3 & 529 & 157 & 6 & 6 \\
\hline
17 & -25 & -25 & -1 & -1 & 713 & 184 & 4 & 4 \\
\hline
18 & 28 & 28 & 8 & 8 & 980 & 267 & -5 & -5 \\
\hline
19 & -25 & -25 & 12 & 12 & 1300 & 320 & -14 & -14 \\
\hline
20 & 62 & 62 & 27 & 27 & 1759 & 459 & -21 & -21 \\
\hline
21 & -22 & -22 & 14 & 14 & 2318 & 559 & -11 & -11 \\
\hline
22 & 56 & 56 & -25 & -25 & 3119 & 801 & 21 & 21 \\
\hline
23 & -74 & -74 & -39 & -39 & 4107 & 988 & 44 & 44 \\
\hline
24 & -396 & -396 & -496 & -496 & 5914 & 1807 & 504 & 504 \\
\hline
25 & -3068 & -3068 & -2979 & -2979 & 10508 & 4594 & 2969 & 2969 \\
\hline
26 & -794 & -794 & -412 & -412 & 13606 & 3098 & 413 & 413 \\
\hline
27 & 35619 & 35619 & 38725 & 38725 & -18948 & -32554 & -38717 & -38717 \\
\hline
28 & 9349 & 9349 & 10583 & 10583 & -21109 & -2161 & -10578 & -10578 \\
\hline
29 & -634587 & -634587 & -667610 & -667610 & 622510 & 643619 & 667596 & 667596 \\
\hline
30 & 39755 & 39755 & 28305 & 28305 & 560813 & -61697 & -28290 & -28290 \\
\hline
\end{tabular}
\caption{\label{tbl:eulerchar} The table of the Euler characteristics of the various graph complexes as defined in \eqref{equ:chidefs}.}
\end{table}






\begin{table}[H]

\noindent\rotatebox{90}{
\scalebox{.472}{
\begin{tabular}{ r | r r r r r r r r r r r r r r r r r r r r r r r r r }
\multicolumn{3}{c}{$Even*$} \\
36 & 0 & 0 & 0 & 0 & 0 & 0 & 0 & 0 & 0 & 0 & 323 & 6914443 & 10238197936 & 2434452003889 & 149691529204800 & 3065797882254941 & 24210784546884942 & 80125688022289219 & 115536095318709248 & 72644240367676642 & 19131543576030878 & 1915629738270074 & 59880793175801 & 377177693363 & 103765564 \\
35 & 0 & 0 & 0 & 0 & 0 & 0 & 0 & 0 & 0 & 0 & 1005 & 12300248 & 11395934706 & 1854245794476 & 81248861159249 & 1206602153008407 & 6924193168254424 & 16471150498499870 & 16647312912300163 & 7021476589491775 & 1152712839432828 & 63299869884371 & 840308953765 & 1094185703 & 0 \\
34 & 0 & 0 & 0 & 0 & 0 & 0 & 0 & 0 & 0 & 0 & 2853 & 20143069 & 11797416222 & 1312699826317 & 40738291624423 & 434093265619034 & 1782503996695968 & 2981432508808535 & 2045883700715935 & 551569517056290 & 52153482139678 & 1353903801649 & 5279400293 & 0 & 0 \\
33 & 0 & 0 & 0 & 0 & 0 & 0 & 0 & 0 & 0 & 0 & 7858 & 30373223 & 11324845175 & 859478584099 & 18738781553491 & 141412445181865 & 407651323249567 & 466336702482262 & 208454338524126 & 33628719414227 & 1628346636468 & 15426982080 & 6247329 & 0 & 0 \\
32 & 0 & 0 & 0 & 0 & 0 & 0 & 0 & 0 & 0 & 0 & 20198 & 42109841 & 10041540904 & 517319664519 & 7841523412407 & 41233491145291 & 81471527376402 & 61513198271050 & 16943477036119 & 1487486705470 & 30453218431 & 59572065 & 0 & 0 & 0 \\
31 & 0 & 0 & 0 & 0 & 0 & 0 & 0 & 0 & 0 & 0 & 46221 & 53552162 & 8183597882 & 284157684349 & 2954983872822 & 10607822460710 & 13933178693051 & 6623831313164 & 1040005051656 & 42886529623 & 257189208 & 0 & 0 & 0 & 0 \\
30 & 0 & 0 & 0 & 0 & 0 & 0 & 0 & 0 & 0 & 3 & 93366 & 62298561 & 6092177954 & 141175882137 & 990244991515 & 2364327746626 & 1983424893300 & 556534619930 & 44349201196 & 664231574 & 412966 & 0 & 0 & 0 & 0 \\
29 & 0 & 0 & 0 & 0 & 0 & 0 & 0 & 0 & 0 & 32 & 168907 & 66042289 & 4111001284 & 62747495599 & 290471343715 & 445893643989 & 226333613711 & 34141128298 & 1141986095 & 3521433 & 0 & 0 & 0 & 0 & 0 \\
28 & 0 & 0 & 0 & 0 & 0 & 0 & 0 & 0 & 1 & 127 & 276458 & 63413970 & 2490799385 & 24610685274 & 73085889211 & 68938018460 & 19620090166 & 1375793368 & 13412419 & 0 & 0 & 0 & 0 & 0 & 0 \\
27 & 0 & 0 & 0 & 0 & 0 & 0 & 0 & 0 & 1 & 311 & 407915 & 54684111 & 1339061158 & 8371041155 & 15357400905 & 8360542063 & 1190269491 & 30068745 & 30528 & 0 & 0 & 0 & 0 & 0 & 0 \\
26 & 0 & 0 & 0 & 0 & 0 & 0 & 0 & 0 & 0 & 684 & 537528 & 41907083 & 629274575 & 2413785991 & 2598217463 & 745263236 & 43995862 & 227577 & 0 & 0 & 0 & 0 & 0 & 0 & 0 \\
25 & 0 & 0 & 0 & 0 & 0 & 0 & 0 & 0 & 0 & 1545 & 627110 & 28187228 & 253575578 & 572310253 & 335860329 & 44006088 & 749459 & 0 & 0 & 0 & 0 & 0 & 0 & 0 & 0 \\
24 & 0 & 0 & 0 & 0 & 0 & 0 & 0 & 0 & 1 & 3213 & 641553 & 16379726 & 85417799 & 106923865 & 30618203 & 1424662 & 2526 & 0 & 0 & 0 & 0 & 0 & 0 & 0 & 0 \\
23 & 0 & 0 & 0 & 0 & 0 & 0 & 0 & 0 & 7 & 5625 & 567604 & 8048887 & 23223881 & 14779073 & 1718769 & 16244 & 0 & 0 & 0 & 0 & 0 & 0 & 0 & 0 & 0 \\
22 & 0 & 0 & 0 & 0 & 0 & 0 & 0 & 0 & 17 & 8208 & 426113 & 3246800 & 4844186 & 1365037 & 45030 & 0 & 0 & 0 & 0 & 0 & 0 & 0 & 0 & 0 & 0 \\
21 & 0 & 0 & 0 & 0 & 0 & 0 & 0 & 0 & 28 & 10175 & 265083 & 1032036 & 716981 & 69847 & 221 & 0 & 0 & 0 & 0 & 0 & 0 & 0 & 0 & 0 & 0 \\
20 & 0 & 0 & 0 & 0 & 0 & 0 & 0 & 0 & 65 & 10658 & 132526 & 243759 & 66081 & 1274 & 0 & 0 & 0 & 0 & 0 & 0 & 0 & 0 & 0 & 0 & 0 \\
19 & 0 & 0 & 0 & 0 & 0 & 0 & 0 & 0 & 155 & 9033 & 50863 & 38916 & 2938 & 0 & 0 & 0 & 0 & 0 & 0 & 0 & 0 & 0 & 0 & 0 & 0 \\
18 & 0 & 0 & 0 & 0 & 0 & 0 & 0 & 0 & 252 & 5849 & 13867 & 3497 & 30 & 0 & 0 & 0 & 0 & 0 & 0 & 0 & 0 & 0 & 0 & 0 & 0 \\
17 & 0 & 0 & 0 & 0 & 0 & 0 & 0 & 0 & 291 & 2742 & 2329 & 110 & 0 & 0 & 0 & 0 & 0 & 0 & 0 & 0 & 0 & 0 & 0 & 0 & 0 \\
16 & 0 & 0 & 0 & 0 & 0 & 0 & 0 & 4 & 262 & 879 & 188 & 0 & 0 & 0 & 0 & 0 & 0 & 0 & 0 & 0 & 0 & 0 & 0 & 0 & 0 \\
15 & 0 & 0 & 0 & 0 & 0 & 0 & 1 & 14 & 179 & 170 & 7 & 0 & 0 & 0 & 0 & 0 & 0 & 0 & 0 & 0 & 0 & 0 & 0 & 0 & 0 \\
14 & 0 & 0 & 0 & 0 & 0 & 0 & 1 & 16 & 75 & 13 & 0 & 0 & 0 & 0 & 0 & 0 & 0 & 0 & 0 & 0 & 0 & 0 & 0 & 0 & 0 \\
13 & 0 & 0 & 0 & 0 & 0 & 0 & 0 & 10 & 12 & 0 & 0 & 0 & 0 & 0 & 0 & 0 & 0 & 0 & 0 & 0 & 0 & 0 & 0 & 0 & 0 \\
12 & 0 & 0 & 0 & 0 & 0 & 0 & 0 & 6 & 1 & 0 & 0 & 0 & 0 & 0 & 0 & 0 & 0 & 0 & 0 & 0 & 0 & 0 & 0 & 0 & 0 \\
11 & 0 & 0 & 0 & 0 & 0 & 0 & 1 & 1 & 0 & 0 & 0 & 0 & 0 & 0 & 0 & 0 & 0 & 0 & 0 & 0 & 0 & 0 & 0 & 0 & 0 \\
10 & 0 & 0 & 0 & 0 & 0 & 0 & 2 & 0 & 0 & 0 & 0 & 0 & 0 & 0 & 0 & 0 & 0 & 0 & 0 & 0 & 0 & 0 & 0 & 0 & 0 \\
9 & 0 & 0 & 0 & 0 & 0 & 0 & 0 & 0 & 0 & 0 & 0 & 0 & 0 & 0 & 0 & 0 & 0 & 0 & 0 & 0 & 0 & 0 & 0 & 0 & 0 \\
8 & 0 & 0 & 0 & 0 & 0 & 0 & 0 & 0 & 0 & 0 & 0 & 0 & 0 & 0 & 0 & 0 & 0 & 0 & 0 & 0 & 0 & 0 & 0 & 0 & 0 \\
7 & 0 & 0 & 0 & 0 & 0 & 0 & 0 & 0 & 0 & 0 & 0 & 0 & 0 & 0 & 0 & 0 & 0 & 0 & 0 & 0 & 0 & 0 & 0 & 0 & 0 \\
6 & 0 & 0 & 0 & 0 & 1 & 0 & 0 & 0 & 0 & 0 & 0 & 0 & 0 & 0 & 0 & 0 & 0 & 0 & 0 & 0 & 0 & 0 & 0 & 0 & 0 \\
5 & 0 & 0 & 0 & 0 & 0 & 0 & 0 & 0 & 0 & 0 & 0 & 0 & 0 & 0 & 0 & 0 & 0 & 0 & 0 & 0 & 0 & 0 & 0 & 0 & 0 \\
4 & 0 & 0 & 0 & 0 & 0 & 0 & 0 & 0 & 0 & 0 & 0 & 0 & 0 & 0 & 0 & 0 & 0 & 0 & 0 & 0 & 0 & 0 & 0 & 0 & 0 \\
3 & 0 & 0 & 0 & 0 & 0 & 0 & 0 & 0 & 0 & 0 & 0 & 0 & 0 & 0 & 0 & 0 & 0 & 0 & 0 & 0 & 0 & 0 & 0 & 0 & 0 \\
2 & 0 & 0 & 0 & 0 & 0 & 0 & 0 & 0 & 0 & 0 & 0 & 0 & 0 & 0 & 0 & 0 & 0 & 0 & 0 & 0 & 0 & 0 & 0 & 0 & 0 \\
1 & 0 & 0 & 0 & 0 & 0 & 0 & 0 & 0 & 0 & 0 & 0 & 0 & 0 & 0 & 0 & 0 & 0 & 0 & 0 & 0 & 0 & 0 & 0 & 0 & 0 \\
0 & 1 & 0 & 0 & 0 & 0 & 0 & 0 & 0 & 0 & 0 & 0 & 0 & 0 & 0 & 0 & 0 & 0 & 0 & 0 & 0 & 0 & 0 & 0 & 0 & 0 \\
\hline
  & 0 & 1 & 2 & 3 & 4 & 5 & 6 & 7 & 8 & 9 & 10 & 11 & 12 & 13 & 14 & 15 & 16 & 17 & 18 & 19 & 20 & 21 & 22 & 23 & 24 \\
\multicolumn{2}{}{} \\
\end{tabular}}}
\noindent\rotatebox{90}{
\scalebox{.472}{
\begin{tabular}{ r | r r r r r r r r r r r r r r r r r r r r r r r r r }
\multicolumn{3}{c}{$Odd*$} \\
36 & 0 & 0 & 0 & 0 & 0 & 0 & 0 & 0 & 0 & 1 & 478 & 7184889 & 10264841321 & 2428566238061 & 149215526673547 & 3058791031411277 & 24190059107290419 & 80161353715143616 & 115688062454536997 & 72766334437488265 & 19162018671075010 & 1917876403039649 & 59916327855165 & 377215786591 & 104035119 \\
35 & 0 & 0 & 0 & 0 & 0 & 0 & 0 & 0 & 0 & 0 & 1476 & 12656450 & 11409106391 & 1848695699280 & 80990011921765 & 1204350598200103 & 6922376949016458 & 16487604325642019 & 16675515572742650 & 7034270197508544 & 1154435830441245 & 63356370706979 & 840549193304 & 1094411650 & 0 \\
34 & 0 & 0 & 0 & 0 & 0 & 0 & 0 & 0 & 0 & 1 & 3782 & 20544991 & 11795953743 & 1308152524469 & 40616202657361 & 433537414482164 & 1783236020595934 & 2986023303866542 & 2049945027230478 & 552578062244414 & 52218942157741 & 1354622849653 & 5278660177 & 0 & 0 \\
33 & 0 & 0 & 0 & 0 & 0 & 0 & 0 & 0 & 0 & 1 & 9999 & 30816037 & 11308785524 & 856238088390 & 18690570825206 & 141336375950228 & 408109763954217 & 467278000530782 & 208900734518827 & 33684818059654 & 1629759318748 & 15428831783 & 6302088 & 0 & 0 \\
32 & 0 & 0 & 0 & 0 & 0 & 0 & 0 & 0 & 0 & 5 & 22946 & 42496925 & 10015495299 & 515352276433 & 7826655271164 & 41247052313556 & 81620319199596 & 61659409155502 & 16979226824941 & 1489440289747 & 30467187002 & 59607306 & 0 & 0 & 0 \\
31 & 0 & 0 & 0 & 0 & 0 & 0 & 0 & 0 & 0 & 6 & 50341 & 53888311 & 8153609941 & 283162797859 & 2952125602278 & 10621286120700 & 13967209256759 & 6640671981615 & 1041935068084 & 42921726596 & 257014269 & 0 & 0 & 0 & 0 \\
30 & 0 & 0 & 0 & 0 & 0 & 0 & 0 & 0 & 0 & 26 & 98122 & 62477351 & 6065877636 & 140785143630 & 990407963724 & 2369520890328 & 1989161366018 & 557901090988 & 44407460914 & 664289471 & 424465 & 0 & 0 & 0 & 0 \\
29 & 0 & 0 & 0 & 0 & 0 & 0 & 0 & 0 & 0 & 49 & 176937 & 66063551 & 4092424661 & 62644345618 & 290887773316 & 447239696071 & 227024876844 & 34209898447 & 1142991182 & 3528730 & 0 & 0 & 0 & 0 & 0 \\
28 & 0 & 0 & 0 & 0 & 0 & 0 & 0 & 0 & 1 & 165 & 284646 & 63216765 & 2481023713 & 24607633969 & 73281944226 & 69183841838 & 19676613873 & 1377692570 & 13378933 & 0 & 0 & 0 & 0 & 0 & 0 \\
27 & 0 & 0 & 0 & 0 & 0 & 0 & 0 & 0 & 0 & 345 & 416756 & 54431646 & 1335414915 & 8384065789 & 15415999361 & 8391997470 & 1192670365 & 30060258 & 32701 & 0 & 0 & 0 & 0 & 0 & 0 \\
26 & 0 & 0 & 0 & 0 & 0 & 0 & 0 & 0 & 1 & 899 & 541142 & 41679653 & 628888950 & 2422021662 & 2609841863 & 747657371 & 44083731 & 228906 & 0 & 0 & 0 & 0 & 0 & 0 & 0 \\
25 & 0 & 0 & 0 & 0 & 0 & 0 & 0 & 0 & 1 & 1768 & 629051 & 28088250 & 253989803 & 575085565 & 337492002 & 44125758 & 744159 & 0 & 0 & 0 & 0 & 0 & 0 & 0 & 0 \\
24 & 0 & 0 & 0 & 0 & 0 & 0 & 0 & 0 & 5 & 3539 & 638053 & 16346366 & 85804143 & 107579823 & 30713501 & 1422641 & 3056 & 0 & 0 & 0 & 0 & 0 & 0 & 0 & 0 \\
23 & 0 & 0 & 0 & 0 & 0 & 0 & 0 & 0 & 5 & 5655 & 565597 & 8065049 & 23361947 & 14854653 & 1727020 & 16468 & 0 & 0 & 0 & 0 & 0 & 0 & 0 & 0 & 0 \\
22 & 0 & 0 & 0 & 0 & 0 & 0 & 0 & 0 & 24 & 8455 & 423591 & 3261195 & 4888420 & 1373151 & 43891 & 0 & 0 & 0 & 0 & 0 & 0 & 0 & 0 & 0 & 0 \\
21 & 0 & 0 & 0 & 0 & 0 & 0 & 0 & 1 & 36 & 10175 & 266543 & 1043984 & 720525 & 69551 & 364 & 0 & 0 & 0 & 0 & 0 & 0 & 0 & 0 & 0 & 0 \\
20 & 0 & 0 & 0 & 0 & 0 & 0 & 0 & 0 & 100 & 10780 & 132923 & 245866 & 67271 & 1331 & 0 & 0 & 0 & 0 & 0 & 0 & 0 & 0 & 0 & 0 & 0 \\
19 & 0 & 0 & 0 & 0 & 0 & 0 & 0 & 1 & 146 & 8828 & 51862 & 39501 & 2669 & 0 & 0 & 0 & 0 & 0 & 0 & 0 & 0 & 0 & 0 & 0 & 0 \\
18 & 0 & 0 & 0 & 0 & 0 & 0 & 0 & 1 & 266 & 5961 & 13798 & 3406 & 64 & 0 & 0 & 0 & 0 & 0 & 0 & 0 & 0 & 0 & 0 & 0 & 0 \\
17 & 0 & 0 & 0 & 0 & 0 & 0 & 0 & 4 & 273 & 2773 & 2519 & 126 & 0 & 0 & 0 & 0 & 0 & 0 & 0 & 0 & 0 & 0 & 0 & 0 & 0 \\
16 & 0 & 0 & 0 & 0 & 0 & 0 & 0 & 4 & 303 & 955 & 153 & 0 & 0 & 0 & 0 & 0 & 0 & 0 & 0 & 0 & 0 & 0 & 0 & 0 & 0 \\
15 & 0 & 0 & 0 & 0 & 0 & 0 & 1 & 14 & 156 & 154 & 13 & 0 & 0 & 0 & 0 & 0 & 0 & 0 & 0 & 0 & 0 & 0 & 0 & 0 & 0 \\
14 & 0 & 0 & 0 & 0 & 0 & 0 & 0 & 11 & 84 & 13 & 0 & 0 & 0 & 0 & 0 & 0 & 0 & 0 & 0 & 0 & 0 & 0 & 0 & 0 & 0 \\
13 & 0 & 0 & 0 & 0 & 0 & 0 & 1 & 15 & 5 & 0 & 0 & 0 & 0 & 0 & 0 & 0 & 0 & 0 & 0 & 0 & 0 & 0 & 0 & 0 & 0 \\
12 & 0 & 0 & 0 & 0 & 0 & 0 & 0 & 5 & 5 & 0 & 0 & 0 & 0 & 0 & 0 & 0 & 0 & 0 & 0 & 0 & 0 & 0 & 0 & 0 & 0 \\
11 & 0 & 0 & 0 & 0 & 0 & 0 & 3 & 2 & 0 & 0 & 0 & 0 & 0 & 0 & 0 & 0 & 0 & 0 & 0 & 0 & 0 & 0 & 0 & 0 & 0 \\
10 & 0 & 0 & 0 & 0 & 0 & 1 & 0 & 0 & 0 & 0 & 0 & 0 & 0 & 0 & 0 & 0 & 0 & 0 & 0 & 0 & 0 & 0 & 0 & 0 & 0 \\
9 & 0 & 0 & 0 & 0 & 0 & 0 & 1 & 0 & 0 & 0 & 0 & 0 & 0 & 0 & 0 & 0 & 0 & 0 & 0 & 0 & 0 & 0 & 0 & 0 & 0 \\
8 & 0 & 0 & 0 & 0 & 0 & 0 & 0 & 0 & 0 & 0 & 0 & 0 & 0 & 0 & 0 & 0 & 0 & 0 & 0 & 0 & 0 & 0 & 0 & 0 & 0 \\
7 & 0 & 0 & 0 & 0 & 0 & 0 & 0 & 0 & 0 & 0 & 0 & 0 & 0 & 0 & 0 & 0 & 0 & 0 & 0 & 0 & 0 & 0 & 0 & 0 & 0 \\
6 & 0 & 0 & 0 & 0 & 1 & 0 & 0 & 0 & 0 & 0 & 0 & 0 & 0 & 0 & 0 & 0 & 0 & 0 & 0 & 0 & 0 & 0 & 0 & 0 & 0 \\
5 & 0 & 0 & 0 & 0 & 0 & 0 & 0 & 0 & 0 & 0 & 0 & 0 & 0 & 0 & 0 & 0 & 0 & 0 & 0 & 0 & 0 & 0 & 0 & 0 & 0 \\
4 & 0 & 0 & 0 & 0 & 0 & 0 & 0 & 0 & 0 & 0 & 0 & 0 & 0 & 0 & 0 & 0 & 0 & 0 & 0 & 0 & 0 & 0 & 0 & 0 & 0 \\
3 & 0 & 0 & 0 & 0 & 0 & 0 & 0 & 0 & 0 & 0 & 0 & 0 & 0 & 0 & 0 & 0 & 0 & 0 & 0 & 0 & 0 & 0 & 0 & 0 & 0 \\
2 & 0 & 0 & 0 & 0 & 0 & 0 & 0 & 0 & 0 & 0 & 0 & 0 & 0 & 0 & 0 & 0 & 0 & 0 & 0 & 0 & 0 & 0 & 0 & 0 & 0 \\
1 & 0 & 0 & 0 & 0 & 0 & 0 & 0 & 0 & 0 & 0 & 0 & 0 & 0 & 0 & 0 & 0 & 0 & 0 & 0 & 0 & 0 & 0 & 0 & 0 & 0 \\
0 & 1 & 0 & 0 & 0 & 0 & 0 & 0 & 0 & 0 & 0 & 0 & 0 & 0 & 0 & 0 & 0 & 0 & 0 & 0 & 0 & 0 & 0 & 0 & 0 & 0 \\
\hline
  & 0 & 1 & 2 & 3 & 4 & 5 & 6 & 7 & 8 & 9 & 10 & 11 & 12 & 13 & 14 & 15 & 16 & 17 & 18 & 19 & 20 & 21 & 22 & 23 & 24 \\
\end{tabular}
}}

\caption{\label{tbl:dimensions} Dimensions of the spaces of connected graphs $\tilde \VV_{v,e}^{even*}$ and $\tilde \VV_{v,e}^{odd*}$ as computed by Theorem \ref{thm:dimgenfun2} and (a version of) the formulas of section \ref{ConnectedP}. }
\end{table}

\bibliography{biblio}{}
\bibliographystyle{plain}

\end{document}